\documentclass[11pt]{amsart}
\theoremstyle{definition}
\topmargin= .5cm
\textheight= 22.5cm
\textwidth= 32cc
\baselineskip=16pt
\usepackage{indentfirst, amssymb,amsmath,amsthm}
\usepackage{csquotes}
\usepackage{hyperref}
\usepackage{mathrsfs}
\evensidemargin= .9cm
\oddsidemargin= .9cm

\newtheorem{theo}{Theorem}[section]
\newtheorem{lem}{Lemma}[section]

\newtheorem{defi}{Definition}[section]

\newtheorem{ques}{Question}[section]
\newcommand{\ol}{\overline}
\newcommand{\be}{\begin{equation}}
\newcommand{\ee}{\end{equation}}
\newcommand{\beas}{\begin{eqnarray*}}
\newcommand{\eeas}{\end{eqnarray*}}
\newcommand{\bea}{\begin{eqnarray}}
\newcommand{\eea}{\end{eqnarray}}

\numberwithin{equation}{section}
\begin{document}
\title[Uniqueness of meromorphic functions]{Uniqueness of meromorphic functions that share Two Sets}
\date{}
\author[B. Chakraborty, A. K. Pal and S. Saha]{Amit Kumar Pal$^{1}$, Bikash Chakraborty$^{2}$ and Sudip Saha$^{3}$}
\date{}
\address{$^{1}$Department of Mathematics, University of Kalyani, Kalyani, West Bengal 741235, India}
\email{mail4amitpal@gmail.com}

\address{$^{2}$Department of Mathematics, Ramakrishna Mission Vivekananda Centenary College, Rahara,
West Bengal 700 118, India.}
\email{bikashchakraborty.math@yahoo.com, bikash@rkmvccrahara.org}

\address{$^{3}$Department of Mathematics, Ramakrishna Mission Vivekananda Centenary College, Rahara,
West Bengal 700 118, India.}
\email{sudipsaha814@gmail.com, sudip.math@rkmvccrahara.ac.in}
\maketitle
\let\thefootnote\relax
\footnotetext{2020 Mathematics Subject Classification: 30D35, 30D30, 30D20}
\footnotetext{Key words and phrases: Meromorphic function, Unique range sets, Nevanlinna theory, Two Set sharing.}
\maketitle
\begin{abstract}
In this note, we introduce a new kind of pair of finite range sets in $\mathbb{C}$ for meromorphic functions corresponding to their uniqueness, i.e.,  how two
meromorphic functions are uniquely determined by their two finite shared sets.
\end{abstract}
\section{Introduction}
We use $M(\mathbb{C})$ to denote the field of all meromorphic functions in $\mathbb{C}$. Let $S\subset\mathbb{C}\cup\{\infty\}$ be a non-empty set with distinct elements and $f\in M(\mathbb{C})$. We set $$E_{f}(S)=\bigcup\limits_{a\in S}\{z~:~f(z)-a=0\},$$
where a zero of $f-a$ with multiplicity $m$ counts $m$ times in $E_{f}(S)$. Let $\ol{E}_{f}(S)$ denote the collection of distinct elements in $E_{f}(S)$.\par
Let $g\in M(\mathbb{C})$. We say that two functions $f$ and $g$ share the set $S$ CM (resp. IM) if $E_{f}(S)=E_{g}(S)$ (resp. $\overline{E}_{f}(S)=\overline{E}_{g}(S)$).\par
Let $f$ and $g$ be two nonconstant entire functions and  $S_{1}=\{1\}$, $S_{2}=\{-1\}$ and $S_{3}=\{a, b\in \mathbb{C}~:~a\not=\pm1, b\not=\pm1, b\not=\frac{1}{a}, b\not=1+\frac{4}{a-1}\}$. F. Gross (\cite{4.01}) proved that  if $E_{f}(S_{j})=E_{g}(S_{j})$ (for $j=1,2,3$), then $f=g$.\par
In 1982, F. Gross and C. C. Yang (\cite{GY}) first proved that if two nonconstant entire functions $f$ and $g$ share the set $S=\{z\in \mathbb{C}:e^{z}+z=0\}$, then $f\equiv g$.\par
Moreover, this type of set was termed as a unique range set for entire functions. Later, similar definition for meromorphic functions was also introduced in the literature.
\begin{defi}(\cite{YY})
Let $S\subset \mathbb{C}\cup\{\infty\}$; $f$ and $g$ be two nonconstant meromorphic (resp. entire) functions. If $E_{f}(S)=E_{g}(S)$ implies $f\equiv g$, then $S$ is called a unique range set for meromorphic (resp. entire) functions or in brief URSM (resp. URSE).
\end{defi}
The introduction of the unique range set for meromophic function (URSM in brief) influenced many researchers to  to find finite URSM’s (e.g., see, \cite{1, 1.1, 1.2, 2, 2.1, 2.2} etc.)
\section{Functions sharing two sets}

In continuation with the notion of unique range sets, in 2003, the following question was asked by Lin and Yi in (\cite{lin}).
\begin{ques} \label{q1}
  Can one find two finite sets $S_j (j = 1, 2)$ such that any two nonconstant meromorphic functions $f$ and $g$ satisfying $E_{f}(S_{j})=E_{g}(S_{j})$ for
$j = 1, 2$ must be identical ?
\end{ques}
To answer the Question \ref{q1}, a lot of investigations has been done to give explicitly a set $S$ with $n$ elements and make $n$ as small as possible such that any two meromorphic functions $f$ and $g$ that share the value $\infty$ and the set $S$ must be equal.\par
In this direction, in 2012, B. Yi and Y. H. Li (\cite{2YL}) provided a significant result. They proved that there exists two finite sets $S_{1}$ (with $5$ elements) and $S_{2}$ (with $2$ elements) such that if any two nonconstant meromorphic functions $f$ and $g$ satisfying the condition $E(S_{j}, f) = E(S_{j}, g)$ for $j = 1, 2$, then $f\equiv g$.
\begin{theo}(\cite{2YL}) \label{th1}
Let $S_{1}=\{0,1\}$ and $S_{2}=\{z~:~\frac{(n-1)(n-2)}{2}z^{n}-n(n-2)z^{n-1}+\frac{n(n-1)}{2}z^{n-2}+1=0\}$, where $n(\geq 5)$ is an integer. If two nonconstant meromorphic functions $f$ and $g$ share $S_1$ CM and $S_2$ CM, then $f\equiv g$.
\end{theo}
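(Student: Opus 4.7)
The plan is to exploit the very special structure of the Frank--Reinders polynomial
$$P(z)=\frac{(n-1)(n-2)}{2}z^{n}-n(n-2)z^{n-1}+\frac{n(n-1)}{2}z^{n-2}+1,$$
whose derivative factors as
$$P'(z)=\frac{n(n-1)(n-2)}{2}\,z^{n-3}(z-1)^{2}.$$
Thus $P$ has only two distinct critical values, $P(0)=1$ and $P(1)=2$, attained precisely at the two elements of $S_{1}=\{0,1\}$. The hypothesis $E_{f}(S_{2})=E_{g}(S_{2})$ CM says that $P(f)$ and $P(g)$ have identical zero-divisors (with multiplicity), while $E_{f}(S_{1})=E_{g}(S_{1})$ CM controls the behaviour at the two critical values.

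First, I would introduce $\Phi:=P(f)/P(g)$. By the shared-zero hypothesis its zeros and poles can only occur at poles of $f$ or of $g$, so
$$T(r,\Phi)\;\leq\;\overline{N}(r,f)+\overline{N}(r,g)+O(1).$$
Applying Nevanlinna's second main theorem to $f$ (and separately $g$) with the $n$ targets in $S_{2}$, the lemma on the logarithmic derivative, and the CM-sharing of $S_{1}$ to match the relevant counting functions, I would aim to show that both $\overline{N}(r,f)$ and $\overline{N}(r,g)$ are themselves $S(r,f)+S(r,g)$. The hypothesis $n\geq 5$ is precisely what is needed to absorb the overcounting coming from the $(n-3)$-fold critical point at $0$ and the double critical point at $1$. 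The outcome would be $T(r,\Phi)=S(r,f)+S(r,g)$, forcing $\Phi$ to be a non-zero constant $c$, i.e.\ $P(f)\equiv c\,P(g)$.

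To pin down $c=1$, I would evaluate at a generic zero $z_{0}$ of $f$: then $P(f)(z_{0})=1$ and, by CM-sharing of $S_{1}$, $g(z_{0})\in\{0,1\}$, hence $P(g)(z_{0})\in\{1,2\}$, giving $c\in\{1,\tfrac{1}{2}\}$. A parallel evaluation at a zero $w_{0}$ of $f-1$ rules out $c=\tfrac{1}{2}$, since it would then force $P(g)(w_{0})=4\notin\{1,2\}$, contradicting CM-sharing of $S_{1}$. Hence $c=1$, i.e.\ $P(f)\equiv P(g)$. Finally, a standard argument based on the critical injectivity of $P$ together with the assumption $n\geq 5$ (which excludes spurious M\"obius-type relations between $f$ and $g$) converts $P(f)\equiv P(g)$ into $f\equiv g$.

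The main obstacle is the Nevanlinna book-keeping in the second step: showing that the pole-divisor contributions to $T(r,\Phi)$ really do get absorbed into $S(r,f)+S(r,g)$. This is where the Frank--Reinders choice of $P$ — critical only at the two points already linked by $S_{1}$ CM — and the size condition $n\geq 5$ both become decisive; without either ingredient the error terms would be too large and one could not conclude that $\Phi$ is constant.
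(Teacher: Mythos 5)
This statement is quoted from Yi and Li \cite{2YL}; the paper does not reprove it, so your attempt can only be measured against the machinery the paper deploys for its own main theorem, which treats the exactly analogous situation. Your endgame (get $P(f)\equiv cP(g)$, pin down $c=1$ by evaluating at shared points, then use critical injectivity of $P$) matches the standard endgame, but the route to ``$P(f)$ is a constant multiple of $P(g)$'' is the heart of the matter and your version of it does not work.

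There are two concrete gaps. First, the estimate $T(r,\Phi)\leq\overline{N}(r,f)+\overline{N}(r,g)+O(1)$ for $\Phi=P(f)/P(g)$ is false. It is true that the zeros and poles of $\Phi$ can only sit at poles of $f$ or $g$, but at a pole of $f$ of order $p$ which is not a pole of $g$, the function $\Phi$ has a pole of order $np$ (note $P(g)\neq 0$ there, since zeros of $P(g)$ are shared zeros of $P(f)$), so the best divisor bound is of the shape $N(r,\infty;\Phi)\leq n\,N(r,f)+\cdots$; and in any case $T(r,\Phi)=m(r,\Phi)+N(r,\infty;\Phi)$ contains the proximity term $m(r,\Phi)$, which is not controlled by any information about where the divisor of $\Phi$ is supported. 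Second, and more seriously, the plan to prove $\overline{N}(r,f)=S(r,f)$ and $\overline{N}(r,g)=S(r,g)$ cannot succeed: taking $f=g$ to be any meromorphic function with many poles satisfies every hypothesis of the theorem, so no smallness of $\overline{N}(r,f)$ can follow from those hypotheses.

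What the second main theorem computation actually yields in this setting (compare the proof of Lemma \ref{6} in the paper) is not that poles are few, but that the auxiliary function $H=\left(\frac{F''}{F'}-\frac{2F'}{F-1}\right)-\left(\frac{G''}{G'}-\frac{2G'}{G-1}\right)$, with $F,G$ suitable normalizations of $P(f),P(g)$, must vanish identically, since otherwise one reaches $(m+n+2)\{T(r,f)+T(r,g)\}\leq(\text{strictly smaller})+S(r,f)+S(r,g)$. From $H\equiv 0$ one gets the M\"{o}bius relation $\frac{1}{F-1}=\frac{c_0}{G-1}+c_1$; Fujimoto's Proposition (Lemma \ref{lem1}) kills $c_1$ using $\min\{q_1,q_2\}\geq 2$, which for the Frank--Reinders polynomial means $n\geq 5$; only then is one in the situation $P(f)\equiv cP(g)$ where your evaluation trick at shared $0$- and $1$-points becomes relevant (and even there you must separately handle the case where $f$ and $g$ omit both $0$ and $1$, so that no point is available to evaluate at). The final passage from $P(f)\equiv P(g)$ to $f\equiv g$ is indeed Fujimoto's uniqueness-polynomial theorem (Lemma \ref{lem2}), which you invoke correctly but as a black box.
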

Now, we note that if $P(z)=\frac{(n-1)(n-2)}{2}z^{n}-n(n-2)z^{n-1}+\frac{n(n-1)}{2}z^{n-2}+1$, then $P'(z)=\frac{n(n-1)(n-2)}{2}z^{n-3}(z-1)^{2}$. Thus $S_{1}$ is the zero set of  $P'(z)$. With this obserbvation, the following question is obvious ?\par
Suppose
\begin{equation}\label{eb1}
P(z)=a_{0}(z-\alpha_{1})(z-\alpha_{2})\ldots(z-\alpha_{n}),
\end{equation}
 where $\alpha_{i}\not=\alpha_{j}$, $1\leq i,j\leq n$; $a_{0}\not=0$. Further suppose that
\begin{equation}\label{eb2}
P'(z)=b_{0}(z-\beta_{1})^{q_1}(z-\beta_{2})^{q_2},
 \end{equation}
\begin{ques} \label{q2}
 Is it possible to take $S_{1}=\{\beta_{1},\beta_{2}\}$ (with $\beta_{1}\not= \beta_{2}$) and $S_{2}=\{\alpha_{1},\alpha_{2},\ldots,\alpha_{n}\}$  in Theorem \ref{th1} such that  any two nonconstant meromorphic functions $f$ and $g$ satisfying $E_{f}(S_{j})=E_{g}(S_{j})$ for
$j = 1, 2$ must be identical ?
\end{ques}
In (\cite{chak}), the authors dealt with the Question \ref{q2}. Let $P(z)$ be a nonconstant monic polynomial. We say $P(z)$ as a uniqueness polynomial if  $P(f)\equiv P (g)$ implies $f \equiv g$ for any two nonconstant meromorphic functions $f,~g$; while a strong uniqueness polynomial if  $P(f)\equiv cP (g)$ implies $f \equiv g$ for any two nonconstant meromorphic functions $f,~g$ and non-zero constant $c$. Moreover, we say that $P(z)$ is satisfying the Fujimoto's Hypothesis (\cite{2.1}) if
\begin{equation}\label{eb3}
  P(\beta_{l_{s}})\not=P(\beta_{l_{t}})~~~~(1\leq l_{s}< l_{t}\leq k).
\end{equation}

\begin{theo}\cite{chak}
Let $P(z)$ be a strong uniqueness polynomial of the form (\ref{eb1}) satisfying the condition (\ref{eb3}) and $P'(z)$ have no simple zeros. Further suppose that $S_{1}=\{\beta_{1},\beta_{2}\}$ and $S_{2}=\{\alpha_{1},\alpha_{2},\ldots,\alpha_{n}\}$.\par
If two nonconstant meromorphic functions $f$ and $g$ share the set $S_{1}$ IM and $S_{2}$ CM,  and $n\geq 6$, then $f\equiv g$.
\end{theo}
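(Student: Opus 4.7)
The strategy is to show that $P(f)\equiv c\,P(g)$ for some nonzero constant $c$; the strong uniqueness hypothesis on $P$ then immediately yields $f\equiv g$. Write $F=P(f)$ and $G=P(g)$. The CM sharing $E_f(S_2)=E_g(S_2)$ says exactly that $F$ and $G$ share $0$ CM, so every common zero of $F$ and $G$ agrees with multiplicity, and $F/G$ can only have zeros or poles at the poles of $f$ and $g$.

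To detect constancy of $F/G$, I introduce the standard Fujimoto-type auxiliary function
\be
\Phi\;=\;\frac{F'}{F}-\frac{G'}{G}\;=\;\frac{P'(f)f'}{P(f)}-\frac{P'(g)g'}{P(g)},
\ee
which vanishes identically if and only if $P(f)\equiv cP(g)$. Assuming toward a contradiction that $\Phi\not\equiv 0$, the CM sharing of $S_2$ ensures that at every common zero of $F$ and $G$ the two simple poles of the logarithmic derivatives cancel in $\Phi$, so
\be
N(r,\Phi)\;\le\;\ol N(r,f)+\ol N(r,g)+S(r,f)+S(r,g).
\ee
On the other hand, the hypothesis $P'(z)=b_0(z-\beta_1)^{q_1}(z-\beta_2)^{q_2}$ with $q_1,q_2\ge 2$ forces every zero of $f-\beta_j$ of multiplicity $m$ to be a zero of $P'(f)f'/P(f)$ of order $m(q_j+1)-1$, and by the IM sharing of $S_1$ any such point is simultaneously a zero of $g-\beta_1$ or $g-\beta_2$, hence of the second term of $\Phi$ as well. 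These high-multiplicity coincidences produce a substantial lower bound on $\ol N(r,1/\Phi)$ in terms of $\ol N(r,1/(f-\beta_1))+\ol N(r,1/(f-\beta_2))$.

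The next step is to apply Nevanlinna's second main theorem to $f$ with the target set $\{\alpha_1,\dots,\alpha_n,\beta_1,\beta_2,\infty\}$ of cardinality $n+3$, obtaining a coefficient of $n+1$ in front of $T(r,f)$, and symmetrically for $g$. Combining this with the CM identity $\ol N(r,1/(f-\alpha_i))=\ol N(r,1/(g-\alpha_i))$ on $S_2$, the IM identity $\sum_{j=1}^{2}\ol N(r,1/(f-\beta_j))=\sum_{j=1}^{2}\ol N(r,1/(g-\beta_j))$ on $S_1$, and the pole/zero estimates for $\Phi$ derived above, one extracts an inequality incompatible with $n\ge 6$ unless $\Phi\equiv 0$. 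Therefore $\Phi\equiv 0$, so $P(f)\equiv cP(g)$ for some constant $c\ne 0$, and the strong uniqueness property of $P$ gives $f\equiv g$.

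The chief obstacle is the delicate bookkeeping in the third paragraph: the IM (rather than CM) sharing of $S_1$ allows unequal multiplicities of $f-\beta_j$ and $g-\beta_j$ at a shared $S_1$-zero, and even permits the ``cross'' configuration $f(z_0)=\beta_1$ together with $g(z_0)=\beta_2$, so the lower bound on the zero counting function of $\Phi$ must be made uniform across both cases. The Fujimoto condition \eqref{eb3}, $P(\beta_1)\ne P(\beta_2)$, is precisely what rules out the spurious $\beta_1\leftrightarrow\beta_2$ exchange as a consistent alternative to $P(f)\equiv cP(g)$ and allows the final contradiction to be extracted cleanly, yielding the sharp bound $n\ge 6$.
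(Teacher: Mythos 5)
Your overall skeleton --- reduce to $P(f)\equiv cP(g)$ and then invoke the strong uniqueness hypothesis --- is the right one, and your two counting estimates for $\Phi=F'/F-G'/G$ are individually correct: $N(r,\Phi)\leq\overline{N}(r,f)+\overline{N}(r,g)$ because the CM sharing of $S_2$ cancels the simple poles of the logarithmic derivatives at common zeros of $F$ and $G$, and $N(r,0;\Phi)\geq\min\{q_1,q_2\}\big(\overline{N}(r,\beta_1;f)+\overline{N}(r,\beta_2;f)\big)$ because $P'$ has no simple zeros and $f,g$ share $S_1$ IM (this is precisely the paper's Lemma \ref{sudip_lemma1} in a different normalization). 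But the step where you claim these combine with the second main theorem to contradict $n\geq 6$ does not close. Writing it out: applying the second main theorem to $f$ and to $g$ with targets $\{\alpha_1,\dots,\alpha_n,\beta_1,\beta_2,\infty\}$ gives
\[
(n+1)\{T(r,f)+T(r,g)\}\leq \overline{N}(r,f)+\overline{N}(r,g)+\overline{N}(r,0;P(f))+\overline{N}(r,0;P(g))+\sum_{j=1}^{2}\big(\overline{N}(r,\beta_j;f)+\overline{N}(r,\beta_j;g)\big)+S(r),
\]
and the only bound your ingredients supply for the middle terms is $\overline{N}(r,0;P(f))+\overline{N}(r,0;P(g))\leq n\{T(r,f)+T(r,g)\}$, while your $\Phi$-estimate bounds the last sum by $\overline{N}(r,f)+\overline{N}(r,g)+S(r)\leq T(r,f)+T(r,g)+S(r)$. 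The outcome is $(n+1)\{T(r,f)+T(r,g)\}\leq (n+2)\{T(r,f)+T(r,g)\}+S(r)$, which is vacuous for every $n$. So the assumption $\Phi\not\equiv 0$ is never contradicted, and the heart of the proof is missing.

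The missing mechanism is the second auxiliary function $H=\big(\frac{F''}{F'}-\frac{2F'}{F}\big)-\big(\frac{G''}{G'}-\frac{2G'}{G}\big)$ (normalized so that the shared value of $F,G$ is the one appearing in the denominators). If $H\not\equiv 0$, every \emph{simple} common zero of $F$ and $G$ is a zero of $H$, so $N(r,0;F|=1)\leq N(r,\infty;H)$, and $N(r,\infty;H)$ is in turn bounded by $\overline{N}(r,\beta_1;f)+\overline{N}(r,\beta_2;f)+\overline{N}(r,f)+\overline{N}(r,g)$ plus the spare zeros of $f'$ and $g'$; this is what converts $\overline{N}(r,0;P(f))$ into roughly $\tfrac{n}{2}\{T(r,f)+T(r,g)\}$ instead of $n\{T(r,f)+T(r,g)\}$, and it is exactly where the lower bound on $n$ enters. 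This is the route the paper itself takes for its main Theorem \ref{Th1} (Lemmas \ref{sudip_lemma1} and \ref{6}), with your $\Phi$-inequality playing only a supporting role inside the estimate of $N(r,\infty;H)$. Note also that $H\equiv 0$ yields only a M\"{o}bius relation $\frac{1}{F}=\frac{c_0}{G}+c_1$, so one still needs Fujimoto's Proposition 7.1 (Lemma \ref{lem1}, using $\min\{q_1,q_2\}\geq 2$ and the hypothesis $P(\beta_1)\neq P(\beta_2)$) to force $c_1=0$ before the strong uniqueness hypothesis applies; your $\Phi$ would deliver the linear relation directly, but only if $\Phi\equiv 0$ could actually be established, which the argument as given does not do.
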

But, in literature, the examples of strong uniqueness polynomials are few. Thus the motivation of this paper is to present some uniqueness theorems which show how two meromorphic functions are uniquely determined by their two shared sets. \par
Let us consider the following polynomial which was introduced in (\cite{1.2}) by A. Banerjee and B. Chakraborty. \\
\begin{equation}\label{eq1n}
P(z):=Q(z)+c:=\sum\limits_{i=0}^{m} \sum\limits_{j=0}^{n} \binom{m}{i}\binom{n}{j}\frac{(-1)^{i+j}}{n+m+1-i-j}z^{n+m+1-i-j}a^{j}b^{i}+c,
\end{equation}
where $a, b$ be two complex numbers such that $b\neq0$, $a\not=b$ and
$$c\not\in \{0,-Q(a),-Q(b),-\frac{Q(a)+Q(b)}{2}\}.$$
The above mentioned polynomial is the generalization of the unique range set generating polynomial introduced in (\cite{1}, \cite{1.1}). Clearly \beas P'(z) &=& \sum\limits_{i=0}^{m} \sum\limits_{j=0}^{n} \binom{m}{i}\binom{n}{j}(-1)^{i+j}z^{m+n-i-j}a^{j}b^{i}\\
&=& \big(\sum\limits_{i=0}^{m}(-1)^{i}\binom{m}{i}z^{m-i}b^{i}\big)\big(\sum\limits_{j=0}^{n}\binom{n}{j}(-1)^{j}z^{n-j}a^{j}\big)\\
&=& (z-b)^{m}(z-a)^{n}\eeas
Thus by the assumptions on $c$, $P(z)$ has only simple zeros.\par Next, we can write
$P(z)-P(b)=(z-b)^{m+1}R_{1}(z)$ where  $R_{1}(b)\not=0$, and
 $P(z)-P(a)=(z-a)^{n+1}R_{2}(z)$ where $R_{2}(a)\not=0$.\par
If $P(a)= P(b)$, then $(z-b)^{m+1}R_{1}(z)=(z-a)^{n+1}R_{2}(z)$, i.e.,  $R_{1}(z)$ has a factor $(z-a)^{n+1}$ which implies the polynomial $P$ is of degree at least $m+1+n+1$, a contradiction. Thus $P(a)\not=P(b)$.\par
 In (\cite{1.2}), Banerjee and Chakraborty proved the following Theorem:
\begin{theo}(\cite{1.2})
Let $m+n\geq 10$ (respectively. 6), $\max\{m,n\} \geq 3$ and $\min\{m,n\}\geq 2$ and $S=\{z~:~P(z)=0\}$ where $P(z)$ is defined in (\ref{eq1n}). If, for two nonconstant meromorphic (respectively. entire) functions $f$ and $g$, $E_{f}(S)=E_{g}(S)$, then $f\equiv g$.
\end{theo}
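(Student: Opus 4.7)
My plan is to follow the standard Nevanlinna-theoretic strategy for single-set URSM theorems. Set $F:=P(f)$ and $G:=P(g)$. Because $c$ has been chosen so that $P$ has only simple zeros, the hypothesis $E_{f}(S)=E_{g}(S)$ is equivalent to the assertion that $F$ and $G$ share the value $0$ counting multiplicities. The factorisation $P'(z)=(z-a)^{n}(z-b)^{m}$ and the already-verified identity $P(a)\neq P(b)$ furnish the structural rigidity of $P$ that will be invoked at the end. Throughout, one exploits $T(r,F)=(m+n+1)T(r,f)+O(1)$ and the analogous identity for $G$.

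Next I introduce the classical auxiliary function
$$H:=\left(\frac{F''}{F'}-\frac{2F'}{F}\right)-\left(\frac{G''}{G'}-\frac{2G'}{G}\right).$$
The lemma on logarithmic derivatives yields $m(r,H)=S(r,f)+S(r,g)$. Because $F$ and $G$ share $0$ CM, common zeros of $F$ and $G$ contribute no poles to $H$. The remaining poles of $H$ arise only from (i) zeros of $F'$ (respectively $G'$) that are not zeros of $F$ (respectively $G$), which by the factorisation of $P'$ are exactly the preimages of $a$ and $b$ under $f$ (respectively $g$), and (ii) poles of $f$ and $g$. Combined with the ramification of $P$ at $a$ and $b$ (of orders $n+1$ and $m+1$), this produces quantitative bounds on $N(r,H)$.

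The main obstacle is the dichotomy $H\not\equiv 0$ versus $H\equiv 0$. In the first case I would combine the second fundamental theorem applied to $f$ and $g$ with the target set $\{a,b,\infty\}$, together with the bound on $N(r,H)$, to derive an inequality of the shape $(m+n-\lambda)T(r,f)\leq S(r,f)+S(r,g)$ and an analogous one for $g$. The constant $\lambda$ is the object of the delicate counting bookkeeping: keeping track of the pole contributions from $f$ and $g$ is exactly what separates the meromorphic threshold $m+n\geq 10$ from the entire threshold $m+n\geq 6$, and the hypotheses $\max\{m,n\}\geq 3$ together with $\min\{m,n\}\geq 2$ are what make the multiplicity bookkeeping at $a$ and $b$ close up. This step is the technical heart of the proof.

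It remains to treat $H\equiv 0$. Integrating twice yields a M\"obius relation $1/F=A/G+B$ for some constants $A,B$. The subcase $B\neq 0$ forces $F$ to omit the value $1/B$ (a Picard-type obstruction on $P(f)$), while the subcase $B=0$, $A\neq 1$ gives $F\equiv AG$. Each such degenerate relation translates, via the exact form of $P$, into the constant $c$ taking one of the excluded values $0,-Q(a),-Q(b),-\tfrac{1}{2}(Q(a)+Q(b))$; these were precisely the restrictions imposed in the definition of $P$. Thus only $A=1$, $B=0$ remains, so $P(f)\equiv P(g)$. Since the Fujimoto hypothesis $P(a)\neq P(b)$ holds, $P$ is a uniqueness polynomial, and one concludes $f\equiv g$.
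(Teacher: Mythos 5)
This statement is not proved in the paper at all: it is Theorem 1.3, imported verbatim from Banerjee--Chakraborty \cite{1.2} as background, so there is no in-paper proof to compare against. That said, your outline is exactly the standard Frank--Reinders/Fujimoto strategy that \cite{1.2} follows and that the present paper reuses for its own main theorem (its Lemmas 3.1--3.6 are precisely your ingredients: the auxiliary function $H$, the second-fundamental-theorem case $H\not\equiv 0$, Fujimoto's Proposition 7.1 for the M\"obius case, the strong-uniqueness-polynomial lemma for $P(f)\equiv AP(g)$, and Fujimoto's uniqueness-polynomial criterion at the end). Structurally your plan is sound, and your reduction of $E_f(S)=E_g(S)$ to CM sharing of $0$ by $P(f)$ and $P(g)$ (valid because the choice of $c$ makes all zeros of $P$ simple) is correct.

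Two caveats. First, the entire content of the numerical hypotheses $m+n\geq 10$ (resp.\ $6$) lives in the $H\not\equiv 0$ counting, which you explicitly defer (``the technical heart''); as written the proposal establishes the architecture but not the theorem, since without that bookkeeping one cannot certify that the stated thresholds suffice. Second, your account of the $H\equiv 0$ case misattributes the mechanism: the subcase $B\neq 0$ in $1/F=A/G+B$ is \emph{not} excluded by the forbidden values of $c$. It is eliminated by Fujimoto's Proposition 7.1 (Lemma 3.4 of this paper), whose inputs are $P(a)\neq P(b)$ --- which the paper derives from the factorizations $P(z)-P(b)=(z-b)^{m+1}R_1(z)$ and $P(z)-P(a)=(z-a)^{n+1}R_2(z)$, independently of $c$ --- together with $\min\{m,n\}\geq 2$. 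The excluded values of $c$ (ensuring $P(a)\neq 0$, $P(b)\neq 0$, $P(a)+P(b)\neq 0$) are what kill the \emph{other} subcase $P(f)\equiv AP(g)$ with $A\neq 1$, and what guarantee simple zeros of $P$ in the first place. With that correction and the counting carried out, your plan does yield the theorem.
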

Inspired by the above theorem, we observe the following theorem which is one of the main theorem of this paper:
\begin{theo}\label{Th1}
Let $S_1=\{a, b\}$ and $S_2=\{z \in \mathbb{C}: P(z)=0\}$, where $P(z)$ is defined in (\ref{eq1n}) with the already defined choice of $a$, $b$. If two nonconstant meromorphic functions $f$ and $g$ share $S_1$ IM and $S_2$ CM, and, either  $n\geq3$ and $m\geq 2$, or $n\geq2$ and $m\geq 3$, then $f\equiv g$.
\end{theo}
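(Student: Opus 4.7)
The plan is to argue by contradiction, assuming $f \not\equiv g$, and to study the auxiliary meromorphic function $\Phi := P(f)/P(g)$. Because $S_2$ is shared CM, the zeros of $P(f)$ and $P(g)$ coincide with their multiplicities and therefore cancel in $\Phi$; consequently all poles of $\Phi$ come only from poles of $f$ and all zeros only from poles of $g$, yielding
\begin{equation*}
\overline{N}(r, \Phi) \leq \overline{N}(r, f)\quad\text{and}\quad \overline{N}(r, 1/\Phi) \leq \overline{N}(r, g).
\end{equation*}
Set $\alpha := P(a)/P(b)$. The assumptions on $c$ ensure $P(a), P(b) \neq 0$ and $P(a) + P(b) \neq 0$, which together with $P(a) \neq P(b)$ (noted in the introduction) make $\{0, \infty, 1, \alpha, 1/\alpha\}$ a set of five pairwise distinct targets for $\Phi$. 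A direct evaluation using IM sharing of $S_1$ shows that $\Phi(z_0) = 1$ whenever $f(z_0) = g(z_0) \in \{a, b\}$, $\Phi(z_0) = \alpha$ whenever $(f, g)(z_0) = (a, b)$, and $\Phi(z_0) = 1/\alpha$ whenever $(f, g)(z_0) = (b, a)$.

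I would then split the argument on whether $\Phi$ is constant. If $\Phi \equiv c_0$, then provided $f$ attains some value of $S_1$ we have $c_0 \in \{1, \alpha, 1/\alpha\}$; the sub-case where $f$ omits $S_1$ entirely is handled by applying the second fundamental theorem to $f$ at $\{a, b, \infty\}$ together with CM $S_2$-sharing. For $c_0 = \alpha$ (the case $c_0 = 1/\alpha$ is symmetric), the identity $P(f) = \alpha P(g)$ evaluated at a zero of $f - a$ forces $P(g) = P(b)$, and hence $g = b$ there by IM sharing and $P(a) \neq P(b)$; evaluated at a zero of $f - b$ it forces $P(a)^2 = P(b)^2$, contradicting $P(a) + P(b) \neq 0$ together with $P(a) \neq P(b)$. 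For $c_0 = 1$, so that $P(f) \equiv P(g)$, I would use the factorization $P(x) - P(y) = (x - y)\, U(x, y)$ combined with the multiplicity gains furnished by $P(z) - P(a) = (z-a)^{n+1} R_2(z)$ and $P(z) - P(b) = (z - b)^{m+1} R_1(z)$ at $S_1$-common points to rule out $U(f, g) \equiv 0$ under the degree hypotheses.

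If $\Phi$ is non-constant, I apply the second fundamental theorem to $\Phi$ at the five targets above:
\begin{align*}
3\, T(r, \Phi) \leq\ & \overline{N}(r, \Phi) + \overline{N}(r, 1/\Phi) + \overline{N}(r, 1/(\Phi - 1)) \\
& + \overline{N}(r, 1/(\Phi - \alpha)) + \overline{N}(r, 1/(\Phi - 1/\alpha)) + S(r, \Phi).
\end{align*}
Each of the three middle $\overline{N}$'s on the right is bounded by compressing multiplicities: at a common $S_1$-point where $(f,g) \in \{(a,a), (b,b)\}$ the function $\Phi - 1$ vanishes to order at least $\min(n+1, m+1)$; at a point $(f, g) = (a, b)$ the function $\Phi - \alpha$ vanishes to order at least $\min(n+1, m+1)$; and symmetrically for $\Phi - 1/\alpha$. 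In parallel, applying the second fundamental theorem to $f$ and $g$ separately at $\{a, b, \infty\} \cup \{\text{roots of }P\}$, and using $\overline{N}(r, 1/P(f)) = \overline{N}(r, 1/P(g))$ from CM $S_2$-sharing, lets me express the right-hand side entirely in terms of $T(r, f)+T(r, g)$ and the counting functions of poles. The outcome is an inequality of the shape
\begin{equation*}
c(n, m)\, (T(r, f) + T(r, g)) \leq \overline{N}(r, f) + \overline{N}(r, g) + S(r, f) + S(r, g),
\end{equation*}
with a constant $c(n, m) > 0$ precisely when $n \geq 3, m \geq 2$ or $n \geq 2, m \geq 3$, delivering the desired contradiction.

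The main obstacle will be the non-constant $\Phi$ case: converting the multiplicity gains at $S_1$-common points into bounds tight enough to close the inequality at exactly the claimed degree thresholds. A secondary delicate point is the $c_0 = 1$ subcase, where one must deduce $f \equiv g$ from $P(f) \equiv P(g)$ without assuming $P$ to be a uniqueness polynomial; here the IM sharing of $\{a, b\}$ combined with the critical-point factorizations of $P$ plays an essential role.
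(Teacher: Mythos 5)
Your overall skeleton (reduce to $P(f)\equiv c_{0}P(g)$, pin down $c_{0}=1$, then conclude $f\equiv g$) matches the paper's, and your five\nobreakdash-target second fundamental theorem for $\Phi=P(f)/P(g)$ is a genuinely nice observation: carried out carefully, the multiplicity compression at the $S_1$-points yields exactly the paper's Lemma \ref{sudip_lemma1}, i.e.
\begin{equation*}
\min\{m,n\}\bigl(\overline{N}(r,a;f)+\overline{N}(r,b;f)\bigr)\leq \overline{N}(r,f)+\overline{N}(r,g)+S(r,f)+S(r,g),
\end{equation*}
since the zeros of $\Phi-1$, $\Phi-\alpha$, $\Phi-1/\alpha$ away from $S_1$-points are simply reabsorbed into $3T(r,\Phi)$. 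But that is \emph{all} this argument gives, and it is not enough to close the non-constant-$\Phi$ case. When you then apply the second fundamental theorem to $f$ and $g$ at $\{a,b,\infty\}\cup S_2$, the right-hand side contains $\overline{N}(r,0;P(f))+\overline{N}(r,0;P(g))=2\overline{N}(r,0;P(f))$, and without further input the only available bound is $2\overline{N}(r,0;P(f))\leq (m+n+1)\bigl(T(r,f)+T(r,g)\bigr)$, which cancels the main term $(m+n+2)(T(r,f)+T(r,g))$ exactly and leaves nothing: your claimed inequality $c(n,m)(T(r,f)+T(r,g))\leq\overline{N}(r,f)+\overline{N}(r,g)+S$ with $c(n,m)>1$ does not follow. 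The paper gets past this point only via the auxiliary function $H$: simple $1$-points of $F$ are zeros of $H$, so $N(r,1;F\mid=1)\leq N(r,\infty;H)$, which is what produces the crucial coefficient $\tfrac{1}{2}$ in front of $N(r,1;F)$ in (\ref{sudip5})--(\ref{sudip6}). Your $\Phi$ cannot substitute for $H$ because the shared zeros of $P(f)$ and $P(g)$ cancel in $\Phi$ and therefore carry no information about $\overline{N}(r,0;P(f))$. This is the central missing idea.

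There are also two secondary gaps in the constant-$\Phi$ branch. First, your elimination of $c_{0}=\alpha$ requires $f$ to actually attain \emph{both} $a$ and $b$; if $f$ (and hence, by IM sharing, $g$) omits one or both of them, the evaluation argument is vacuous, and the fallback you propose (second fundamental theorem at $\{a,b,\infty\}$ plus CM sharing of $S_2$) again collapses for the same cancellation reason as above. The paper's Lemma \ref{lem3} avoids this entirely by applying the second fundamental theorem to $F_{1}=(f-a)^{n+1}R_{2}(f)/P(a)$ with the targets $0$, $\infty$, $\tfrac{P(b)}{P(a)}-1$, $A-1$, which needs no case analysis on omitted values. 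Second, the step ``$P(f)\equiv P(g)\Rightarrow f\equiv g$'' is precisely Fujimoto's uniqueness-polynomial theorem (the paper's Lemma \ref{lem2}, requiring $q\geq 6$, $\min\{q_1,q_2\}\geq2$, $q_1+q_2\geq5$); this is a substantial result, and ``ruling out $U(f,g)\equiv0$ using the critical-point factorizations'' is not an argument but a restatement of the problem. Either cite Fujimoto here or supply a genuine proof; as written this step is unsupported.
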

Now, we explain some definitions and notations which we need to prove the Theorem \ref{Th1}.
\begin{defi} \label{d3}\cite{YY} Let $a\in\mathbb{C}\cup\{\infty\}$ and $m\in \mathbb{N}$.
\begin{enumerate}
\item [i)] By $N(r,a;f\mid=1)$, we mean the counting function of the simple $a$-points of $f$.
\item [ii)] By $N(r,a;f\mid\leq m)$ (resp. $N(r,a;f\mid\geq m)$, we denote the counting function of those $a$-points of $f$ whose multiplicities are not greater (resp. less) than $m$ where each $a$-point is counted according to its multiplicity.
\end{enumerate}
 Similarly, $\ol N(r,a;f \mid\leq m)$ and $\ol N(r,a;f \mid\geq m)$ are the reduced counting function of $N(r,a;f\mid\leq m)$ and $N(r,a;f\mid\geq m)$ respectively.
\end{defi}
\begin{defi}\label{d5}\cite{YY} Let $f$ and $g$ be two nonconstant meromorphic functions such that $f$ and $g$ share $a$ IM. Let $z_{0}$ be an $a$-point of $f$ with multiplicity $p$, an $a$-point of $g$ with multiplicity $q$.
\begin{enumerate}
  \item [i)] By $\ol N_{L}(r,a;f)$, we mean the reduced counting function of those $a$-points of $f$ and $g$ where $p>q$.
  \item [ii)] By $N^{1)}_{E}(r,a;f)$, we mean the counting function of those $a$-points of $f$ and $g$ where $p=q=1$.
  \item [iii)] By $\ol N^{(2}_{E}(r,a;f)$, we mean the reduced counting function of those $a$-points of $f$ and $g$ where $p=q\geq 2$.
\end{enumerate}
Similarly, we can define $\ol N_{L}(r,a;g)$, $N^{1)}_{E}(r,a;g)$, $\ol N^{(2}_{E}(r,a;g)$. When $f$ and $g$ share $a$ with weight $m$ $(m\geq 1)$, then $$N^{1)}_{E}(r,a;f)=N(r,a;f\mid=1).$$
\end{defi}
\begin{defi} \label{d7}\cite{YY} Let $f$, $g$ share a value $a$ IM. We denote by $\ol N_{*}(r,a;f,g)$ the reduced counting function of those $a$-points of $f$ whose multiplicities differ from the multiplicities of the corresponding $a$-points of $g$. Clearly $$\ol N_{*}(r,a;f,g) \equiv \ol N_{*}(r,a;g,f)~~\text{and}~~\ol N_{*}(r,a;f,g)=\ol N_{L}(r,a;f)+\ol N_{L}(r,a;g).$$
\end{defi}
Now, we state some necessary lemmas to prove the main result.
\begin{lem}\label{l2.3}\cite{4.2} Let $f$ be a nonconstant meromorphic function and let \[R(f)=\frac{\sum\limits _{k=0}^{n} a_{k}f^{k}}{\sum \limits_{j=0}^{m} b_{j}f^{j}}\] be an irreducible rational function in $f$ with constant coefficients $\{a_{k}\}$ and $\{b_{j}\}$ where $a_{n}\not=0$ and $b_{m}\not=0$. Then $$T(r,R(f))=d\cdot T(r,f)+S(r,f),$$ where $d=\max\{n,m\}$.
\end{lem}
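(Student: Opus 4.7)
The plan is to prove both directions of the identity $T(r, R(f)) = d\,T(r, f) + S(r, f)$, which is the classical Valiron--Mokhon'ko theorem.

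For the upper bound, write $R = P/Q$ in lowest terms with $\deg P = n$ and $\deg Q = m$, and let $\beta_1, \ldots, \beta_m$ denote the zeros of $Q$. A pointwise analysis of $\log^+|R(w)|$ on the Riemann sphere gives
\[
\log^+|R(w)| \leq d\,\log^+|w| + \sum_{j=1}^{m}\log^+\frac{1}{|w - \beta_j|} + O(1).
\]
Integrating over $|z| = r$ yields $m(r, R(f)) \leq d\,m(r, f) + \sum_j m(r, 1/(f - \beta_j)) + O(1)$. A pole of $R(f)$ can arise only from a pole of $f$ (with multiplicity multiplied by $\max(n-m, 0) \leq d$) or from a $\beta_j$-point of $f$, so $N(r, R(f)) \leq \max(n-m, 0)\,N(r, f) + \sum_j N(r, 1/(f - \beta_j)) + O(1)$. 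Adding these two bounds and invoking the first fundamental theorem $T(r, 1/(f - \beta_j)) = T(r, f) + O(1)$ for each $\beta_j$ yields $T(r, R(f)) \leq d\,T(r, f) + O(1)$.

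For the lower bound, choose a constant $c \in \mathbb{C}$ outside the finite set of critical values of $R$ and outside the countable set $R(D_f)$, where $D_f \subset \mathbb{C}$ is the (countable) collection of Nevanlinna-deficient values of $f$. Then $P(w) - cQ(w)$ has degree exactly $d$ and splits into $d$ distinct simple factors, giving roots $\gamma_1, \ldots, \gamma_d$ that are neither zeros of $Q$ nor deficient values of $f$. Consequently
\[
R(f) - c = \frac{a_n \prod_{i=1}^{d}(f - \gamma_i)}{Q(f)}
\]
with numerator and denominator coprime, so every zero of $R(f) - c$ is a $\gamma_i$-point of $f$ of the same multiplicity; hence $N(r, 1/(R(f) - c)) = \sum_i N(r, 1/(f - \gamma_i))$. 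By the first fundamental theorem applied on both sides,
\[
T(r, R(f)) \geq N\!\left(r, \tfrac{1}{R(f)-c}\right) = \sum_{i=1}^{d}\Bigl(T(r, f) - m\!\left(r, \tfrac{1}{f - \gamma_i}\right)\Bigr) + O(1) = d\,T(r, f) + S(r, f),
\]
where the last equality uses $m(r, 1/(f - \gamma_i)) = S(r, f)$ for every $i$ by the choice of $c$.

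The main obstacle is the lower bound, specifically ensuring that a single value $c$ can be selected so that all $d$ preimages $R^{-1}(c) = \{\gamma_1, \ldots, \gamma_d\}$ simultaneously avoid the exceptional set of $f$. This forces one to exploit both the $d$-fold covering property of $R$ on the Riemann sphere and the countability of Nevanlinna's deficient set; the rest of the argument is a routine separation into proximity and counting functions.
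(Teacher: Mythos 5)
The paper offers no proof of this lemma --- it is quoted verbatim from Mokhon'ko's 1971 paper --- so your attempt must stand on its own, and as written both halves have genuine gaps. In the upper bound, your pointwise inequality $\log^+|R(w)|\le d\,\log^+|w|+\sum_j\log^+|w-\beta_j|^{-1}+O(1)$ is true but too lossy: adding your two estimates and applying the first fundamental theorem gives
\[
T(r,R(f))\;\le\; d\,m(r,f)+\max(n-m,0)\,N(r,f)+\sum_{j}T\Bigl(r,\tfrac{1}{f-\beta_j}\Bigr)+O(1)\;=\;d\,T(r,f)+m\cdot m(r,f)+O(1),
\]
since $\max(n-m,0)+m=d$; the surplus $m\cdot m(r,f)$ (degree of $Q$ times the proximity function of $f$) is not $S(r,f)$ --- for entire $f$ it equals $m\,T(r,f)$. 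The coefficient of $\log^+|w|$ has to be $\max(n-m,0)$, not $d$: one checks the regions $|w|$ large (where $|R(w)|=O(|w|^{n-m})$) and $|w|$ bounded separately and keeps the multiplicities $q_j$ of the $\beta_j$ in the sum; with that sharpened inequality the bookkeeping closes, but your version does not.

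The lower bound has a more serious conceptual gap: the step ``$m(r,1/(f-\gamma_i))=S(r,f)$ because $\gamma_i$ is not a deficient value'' is false for Nevanlinna deficiency, which is a $\liminf$. Knowing $\delta(\gamma_i,f)=0$ only says $m(r,\gamma_i;f)/T(r,f)\to 0$ along \emph{some} sequence $r_k\to\infty$, and different $\gamma_i$ may have different good sequences, so no single $c$ is guaranteed to make $\sum_i m(r,1/(f-\gamma_i))=S(r,f)$. What you actually need is vanishing \emph{Valiron} deficiency, $\limsup_r m(r,\gamma_i;f)/T(r,f)=0$, for all $d$ preimages simultaneously; but the set of Valiron-deficient values is in general uncountable (it is only known to have logarithmic capacity zero, a much deeper theorem), so the countability argument does not produce such a $c$. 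This is precisely the difficulty the classical proofs avoid: Mokhon'ko's argument is an elementary induction on the degree using Valiron's splitting of the polynomial into high- and low-order parts, and an alternative route uses the Ahlfors--Shimizu characteristic together with the fact that $R$ is a degree-$d$ covering of the sphere, whence $A(t,R\circ f)=d\,A(t,f)$ exactly. Either route bypasses deficient values entirely; your proposal, as it stands, does not establish either inequality.
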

\begin{lem}(\cite{2.1}, Proposition 7.1.)\label{lem1}
Let $P(z)$ be a nonzero polynomial of degree $q\geq 5$ without multiple zeros whose derivative is given by $$P'(z)=q(z-d_{1})^{q_{1}}(z-d_{2})^{q_{2}},$$ where $d_{1}\not= d_{2}$ and $q_{1}+q_{2}=q-1$.  Assume that $P(d_{1})\not=P(d_{2})$ and there are two distinct nonconstant meromorphic functions $f$ and $g$ such $$\frac{1}{P(f)}=\frac{c_{0}}{P(g)}+c_{1}$$
for some constants $c_{0}\not=0$ and $c_{1}$. If $\min\{q_{1},q_{2}\}\geq 2$, then $c_{1}=0$.
\end{lem}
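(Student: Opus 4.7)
The plan is to argue by contradiction: assume $c_1\neq 0$, set $\alpha:=1/c_1$ and $\alpha':=-c_0/c_1$ (both nonzero since $c_0\neq 0$), and derive a contradiction via Nevanlinna's Second Fundamental Theorem. First, the hypothesis rearranges to
\[
P(g)\bigl(1-c_1P(f)\bigr)=c_0P(f),\qquad\text{equivalently}\qquad P(g)=\frac{c_0P(f)}{1-c_1P(f)},
\]
from which one also reads off $P(g)-\alpha'=-\alpha'/\bigl(1-c_1P(f)\bigr)$. The right-hand side of the first display is a rational function in $f$ of degree $q$ (numerator and denominator are coprime polynomials of degree $q$ in $f$), so Lemma \ref{l2.3} yields $qT(r,g)=T(r,P(g))=qT(r,f)+S(r,f)$; hence $T(r,f)$ and $T(r,g)$ agree up to the small term $S(r,\cdot)$.

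Next I would read off the pole correspondence. The identity $P(g)-\alpha'=-\alpha'/(1-c_1P(f))$ shows that $g$ is finite at every pole of $f$, so in particular $f$ and $g$ have no common poles. At any point $z_0$ with $f(z_0)\in P^{-1}(\alpha)$ one has $c_0P(f)(z_0)=c_0\alpha\neq 0$ while $1-c_1P(f)(z_0)=0$, which forces $g$ to have a pole at $z_0$; balancing orders in the rearranged equation, the pole order $k$ of $g$ and the zero order of $P(f)-\alpha$ at $z_0$ satisfy $\mathrm{ord}_{z_0}(P(f)-\alpha)=kq$. Thus the poles of $g$ (of multiplicity $k$) correspond bijectively with the $\alpha$-points of $P(f)$ (with zero multiplicity $kq$ on the latter side), and symmetrically poles of $f$ correspond to zeros of $P(g)-\alpha'$.

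The proof then splits into cases according to whether $\alpha\in\{P(d_1),P(d_2)\}$ (and symmetrically $\alpha'$). In the \emph{generic case} $\alpha\notin\{P(d_1),P(d_2)\}$, the polynomial $P(z)-\alpha$ has $q$ simple roots $\gamma_1,\ldots,\gamma_q$; each $\gamma_i$-point of $f$ has multiplicity at least $q$, and $\sum_{i=1}^{q}\overline{N}(r,\gamma_i;f)=\overline{N}(r,\infty;g)$. The Second Fundamental Theorem applied to $f$ at $\gamma_1,\ldots,\gamma_q,\infty$ gives $(q-1)T(r,f)\leq \overline{N}(r,\infty;g)+\overline{N}(r,\infty;f)+S(r,f)\leq T(r,g)+T(r,f)+S(r,f)$, so $(q-2)T(r,f)\leq T(r,g)+S(r,f)$; combined with the symmetric estimate for $g$ this contradicts $q\geq 5$. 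In the \emph{critical case} $\alpha=P(d_1)$, the factorization $P(z)-\alpha=(z-d_1)^{q_1+1}\tilde P_1(z)$ with $\tilde P_1$ of degree $q_2$ and simple zeros $\gamma_1,\ldots,\gamma_{q_2}$ leads, via the Second Fundamental Theorem at $d_1,\gamma_1,\ldots,\gamma_{q_2},\infty$, to $(q_2-1)T(r,f)\leq T(r,g)+S(r,f)$. Combining with the corresponding inequality for $g$ closes every sub-case except possibly $q_1=q_2=2$ (hence $q=5$) with both $\alpha,\alpha'\in\{P(d_1),P(d_2)\}$.

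The main obstacle is this extremal sub-case, where the coefficient $(q_2-1)=(q_1-1)=1$ is too weak. The crucial additional observation is integrality of multiplicities: at a $d_1$-point of $f$ that is a pole of $g$ of order $k$, the balance $(q_1+1)m=kq$ becomes $3m=5k$, and since $\gcd(3,5)=1$, this forces $m$ to be a multiple of $q=5$; the same bound $m\geq q$ holds at the $\gamma_i$-points, where $m=kq$ directly. Therefore each of $\overline{N}(r,d_1;f)$, $\overline{N}(r,\gamma_1;f)$, $\overline{N}(r,\gamma_2;f)$ is at most $(1/5)T(r,f)+O(1)$, so $\overline{N}(r,\infty;g)\leq(3/5)T(r,f)+O(1)$. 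Substituting into the Second Fundamental Theorem $2T(r,f)\leq\overline{N}(r,\infty;g)+\overline{N}(r,\infty;f)+S(r,f)\leq(3/5)T(r,f)+T(r,f)+S(r,f)$ yields $(2/5)T(r,f)\leq S(r,f)$, the desired contradiction that closes the proof.
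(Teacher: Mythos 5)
The paper does not prove this lemma at all; it is imported verbatim from Fujimoto (\cite{2.1}, Proposition 7.1), so there is no in-paper argument to compare yours against. Judged on its own, your reconstruction has the right skeleton: the rearrangement $P(g)=c_0P(f)/(1-c_1P(f))$, the Mokhon'ko step giving $T(r,f)=T(r,g)+S(r,f)$, the bijection between poles of $g$ and $\alpha$-points of $P(f)$ with the order balance $\mathrm{ord}_{z_0}(P(f)-\alpha)=kq$, and the Second Fundamental Theorem over the preimage $P^{-1}(\alpha)$ are all correct and are essentially Fujimoto's mechanism. The generic case and the critical case with complementary exponent at least $3$ are handled correctly.

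There is, however, a genuine error in your case enumeration. When $\alpha=P(d_1)$ the inequality you obtain is $(q_2-1)T(r,f)\leq T(r,g)+S(r,f)$, whose coefficient depends only on $q_2$ (the exponent at the \emph{other} critical point), and symmetrically for $\alpha'$. Both coefficients equal $1$ whenever the two relevant complementary exponents are $2$; this does \emph{not} force $q_1=q_2=2$. Concretely, if $c_0=-1$ then $\alpha=\alpha'$, and the sub-case $\alpha=\alpha'=P(d_1)$ with $(q_1,q_2)=(4,2)$, $q=7$ defeats both inequalities yet is excluded by your claim that only $q=5$ survives; your explicit integrality computation ($3m=5k$) is carried out only for that one shape. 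The good news is that the very trick you invoke closes \emph{every} critical sub-case uniformly: if $\alpha=P(d_1)$, a $d_1$-point of $f$ of multiplicity $m$ satisfies $(q_1+1)m=qk$ with $q_1+1=q-q_2$, so $m$ is a multiple of $q/\gcd(q_2,q)\geq q/q_2$, giving $\overline{N}(r,d_1;f)\leq \tfrac{q_2}{q}T(r,f)+O(1)$, while each of the $q_2$ simple preimages contributes at most $\tfrac{1}{q}T(r,f)+O(1)$; feeding $\overline{N}(r,\infty;g)\leq\tfrac{2q_2}{q}T(r,f)+O(1)$ into the Second Fundamental Theorem at $q_2+2$ values yields $\bigl(q_2-1-\tfrac{2q_2}{q}\bigr)T(r,f)\leq S(r,f)$, and $q_2-1-\tfrac{2q_2}{q}>0$ holds for all $q_2\geq 2$, $q\geq 5$. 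Replace the faulty "only $q_1=q_2=2$ remains" step by this uniform estimate and the proof is complete.
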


\begin{lem}\label{lem3}
Let $P(z)$ be a polynomial defined in (\ref{eq1n}). If for any two nonconstant meromorphic functions $f$ and $g$, $P(f)\equiv AP(g)$, where $A$ is any nonzero constant, and, either $n\geq 3$, $m\geq 1$, or, $m\geq 3$, $n\geq 1$, then $A=1$.
\end{lem}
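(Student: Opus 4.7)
The plan is to argue by contradiction: assume $A \neq 1$. Applying Lemma \ref{l2.3} to $P(f) = AP(g)$ yields $T(r,f) = T(r,g) + O(1)$. The strategy exploits the factorisations recorded earlier in the preamble,
\[
P(z) - P(a) = (z-a)^{n+1}R_{2}(z), \qquad P(z) - P(b) = (z-b)^{m+1}R_{1}(z),
\]
with $\deg R_{2} = m$, $\deg R_{1} = n$, and $R_{1}(b), R_{2}(a) \neq 0$, to rewrite $P(f) = AP(g)$ as
\[
(f-a)^{n+1}R_{2}(f) = A\bigl(P(g) - K\bigr), \qquad (f-b)^{m+1}R_{1}(f) = A\bigl(P(g) - L\bigr),
\]
where $K := P(a)/A$ and $L := P(b)/A$. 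Since $P'(z) = (z-a)^{n}(z-b)^{m}$, the only critical values of $P$ are $P(a)$ and $P(b)$. The constraints on $c$ give $P(a), P(b) \neq 0$ and $P(a) \neq \pm P(b)$ (the last coming from $c \neq -(Q(a)+Q(b))/2$), and together with $A \neq 1$ this yields $K \neq P(a)$ and $L \neq P(b)$. The only exceptional sub-cases are therefore $A = P(a)/P(b)$ (where $K = P(b)$) and $A = P(b)/P(a)$ (where $L = P(a)$); these cannot occur simultaneously.

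In the generic situation, say $K$ is non-critical, $P(z) - K$ has $n+m+1$ distinct simple roots $\gamma_{1}, \dots, \gamma_{n+m+1}$ and the first factorisation becomes
\[
(f-a)^{n+1}R_{2}(f) = A\prod_{j=1}^{n+m+1}(g - \gamma_{j}).
\]
I will apply the Second Fundamental Theorem to $g$ with the $n+m+2$ targets $\gamma_{1}, \dots, \gamma_{n+m+1}, \infty$, producing
\[
(n+m)T(r,g) \leq \sum_{j=1}^{n+m+1}\overline{N}(r,\gamma_{j};g) + \overline{N}(r,\infty;g) + S(r,g).
\]
Matching zero divisors on the two sides of the displayed identity gives
$\sum_{j}\overline{N}(r,\gamma_{j};g) = \overline{N}(r,a;f) + \sum_{i}\overline{N}(r,r_{i};f) \leq (m+1)T(r,f) + O(1)$,
where $r_{i}$ ranges over the $\leq m$ distinct roots of $R_{2}$. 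Combined with $T(r,f) = T(r,g) + O(1)$ and $\overline{N}(r,\infty;g) \leq T(r,g)$, this produces $(n-2)T(r,g) \leq S(r,g)$, contradicting $n \geq 3$. The symmetric calculation from the second factorisation delivers $(m-2)T(r,g) \leq S(r,g)$, contradicting $m \geq 3$.

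The exceptional sub-cases will be handled by a swap trick: viewing the identity as $P(g) = A^{-1}P(f)$ produces an analogous pair of factorisations with constants $K' := AP(a)$ and $L' := AP(b)$, and a parallel check (again relying on $P(a) \neq \pm P(b)$) shows that in each exceptional sub-case the swapped constant $K'$ (respectively $L'$) is non-critical, so the same SFT argument applied to $f$ instead of $g$ yields the desired $(n-2)T(r,f) \leq S(r,f)$ or $(m-2)T(r,f) \leq S(r,f)$. Since the hypothesis forces $\max(m,n) \geq 3$, a contradiction arises in every case. The hard part will be the exceptional-value bookkeeping: one must verify carefully that the excluded values of $c$ preclude the pathological coincidences $P(a) = \pm P(b)$, which is what makes the reduction \enquote{at least one of $K, K'$ is non-critical, and at least one of $L, L'$ is non-critical} valid.
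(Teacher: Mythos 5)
Your proposal is correct: every step checks out, including the exceptional-value bookkeeping you flag as the hard part ($P(a)\neq P(b)$ follows from the degree argument already recorded in the preamble, while $P(a)P(b)\neq 0$ and $P(a)+P(b)\neq 0$ follow from the excluded values of $c$; hence the two exceptional ratios $P(a)/P(b)$ and $P(b)/P(a)$ are distinct and your swap trick always lands on a non-critical value). The route does differ from the paper's in how the second fundamental theorem is deployed. The paper introduces $F_1=P(f)/P(a)-1=(f-a)^{n+1}R_2(f)/P(a)$ and $G_1=P(g)/P(a)-1$, rewrites the hypothesis as $F_1+1\equiv A(G_1+1)$, and applies the second fundamental theorem to the degree-$(m+n+1)$ composite $F_1$ with the three finite targets $0$, $P(b)/P(a)-1$ and $A-1$; the preimages of these are small because $P(f)=P(a)$, $P(f)=P(b)$ and $P(g)=P(a)$ have at most $m+1$, $n+1$ and $m+1$ distinct solution values respectively, which yields $2(m+n+1)\leq n+2m+4$, i.e.\ $n\leq 2$. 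Its exceptional sub-case $A=P(b)/P(a)$, where two of those targets collide, is handled by running the same count on $G_1$, using $P(a)\neq\pm P(b)$ exactly as you do. You instead transfer only the single value $P(a)$ of $P(f)$ to the value $K=P(a)/A$ of $P(g)$, unpack $P(z)-K$ into its $m+n+1$ simple roots, and apply the second fundamental theorem to $g$ itself with those roots and $\infty$ as targets, trading three value-transfers for one at the cost of many targets and arriving at the same threshold $(n-2)T(r,g)\leq S(r,g)$. Your version is somewhat more transparent about the source of the contradiction (the $(n+1)$-fold collapse of preimages over $P(a)$) and avoids the auxiliary functions $F_1,G_1$, while the paper's stays closer to the standard Fujimoto-style machinery; numerically the two arguments are equivalent and both hinge on the same case split over the exceptional ratios.
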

\begin{proof} The idea of the proof is from (\cite{1.2}). By choice of $a, b, c$, we note that $P(a)\not=P(b)$, $P(a)+P(b)\not=0$ and $P(z)$ has no multiple zero. Also, we note that
$$P(z)-P(b)=(z-b)^{m+1}R_{1}(z),$$ where  $R_{1}(b)\not=0$, and
$$P(z)-P(a)=(z-a)^{n+1}R_{2}(z),$$ where $R_{2}(a)\not=0$. Now, we consider two cases:\\
\textbf{Case-I} Assume that $n\geq 3$ and $m\geq1$. We define
$$F_{1}=\frac{(f-a)^{n+1}R_{2}(f)}{P(a)},~~\text{and}~~G_{1}=\frac{(g-a)^{n+1}R_{2}(g)}{P(a)}.$$
Then $$F_{1}+1\equiv A(G_{1}+1).$$
If $A\not=1$ and $A\not=\frac{P(b)}{P(a)}$, then using the first fundamental theorem, second fundamental theorem and Mokhon'ko Lemma, we have
\beas &&2(m+n+1)T(r,f)+O(1)\\
&=&2T(r,F_{1})\\
&\leq& \ol N(r,\infty;F_1)+\ol N(r,0;F_1)+\ol N\left(r,\frac{P(b)}{P(a)}-1;F_1\right)+\ol N(r,A-1;F_1)+S(r,F_1)\\
&\leq&\ol N(r,\infty;f)+(m+1)T(r,f)+(n+1)T(r,f)+(m+1)T(r,g)+S(r,f)\\
&\leq& (n+2m+4)T(r,f)+S(r,f),\eeas
which is impossible as $n\geq 3$. Thus we assume that $A\not=1$ and $A=\frac{P(b)}{P(a)}$, i.e.,
$$F_{1}\equiv \frac{P(b)}{P(a)}\left(G_{1}+1-\frac{P(a)}{P(b)}\right).$$
Now, using the first fundamental theorem, second fundamental theorem and Mokhon'ko Lemma, we have
\beas &&2(m+n+1)T(r,g)+O(1)\\
&=&2T(r,G_{1})\\
&\leq& \ol N(r,\infty;G_1)+\ol N(r,0;G_1)+\ol N\left(r,\frac{P(a)}{P(b)}-1;G_1\right)+\ol N(r,\frac{P(b)}{P(a)}-1;G_1)+S(r,G_1)\\
&\leq&\ol N(r,\infty;g)+(m+1)T(r,g)+(m+1)T(r,f)+(n+1)T(r,g)+S(r,g)\\
&\leq& (n+2m+4)T(r,g)+S(r,g),\eeas
which is impossible as $n\geq 3$. Hence  $A=1$.\\
\textbf{Case-II} Assume that $m\geq 3$ and $n\geq1$. We define
$$F_{2}=\frac{(f-b)^{m+1}R_{1}(f)}{P(b)},~~\text{and}~~G_{2}=\frac{(g-b)^{m+1}R_{1}(g)}{P(b)}.$$
Then $$F_{2}+1\equiv A(G_{2}+1).$$
If $A\not=1$ and $A\not=\frac{P(a)}{P(b)}$, then using the first fundamental theorem, second fundamental theorem and Mokhon'ko Lemma, we have
\beas &&2(m+n+1)T(r,f)+O(1)\\
&=&2T(r,F_{2})\\
&\leq& \ol N(r,\infty;F_2)+\ol N(r,0;F_2)+\ol N\left(r,\frac{P(a)}{P(b)}-1;F_2\right)+\ol N(r,A-1;F_2)+S(r,F_2)\\
&\leq&\ol N(r,\infty;f)+(n+1)T(r,f)+(m+1)T(r,f)+(n+1)T(r,g)+S(r,g)\\
&\leq& (2n+m+4)T(r,f)+S(r,f),\eeas
which is impossible as $m\geq 3$. Thus we assume that $A\not=1$ and $A=\frac{P(a)}{P(b)}$, i.e.,
$$F_{2}\equiv \frac{P(a)}{P(b)}\left(G_{2}+1-\frac{P(b)}{P(a)}\right).$$
Now, using the first fundamental theorem, second fundamental theorem and Mokhon'ko Lemma, we have
\beas &&2(m+n+1)T(r,g)+O(1)\\
&=&2T(r,G_{2})\\
&\leq& \ol N(r,\infty;G_2)+\ol N(r,0;G_2)+\ol N\left(r,\frac{P(a)}{P(b)}-1;G_2\right)+\ol N(r,\frac{P(b)}{P(a)}-1;G_2)+S(r,G_2)\\
&\leq&\ol N(r,\infty;g)+(n+1)T(r,g)+(m+1)T(r,g)+(n+1)T(r,f)+S(r,g)\\
&\leq& (2n+m+4)T(r,f)+S(r,f),\eeas
which is impossible as $m\geq 3$. Hence  $A=1$.\\
\end{proof}
\begin{lem} (\cite{2.2})\label{lem2}
Let $P(z)$ be a nonzero monic polynomial  of degree $q\geq 6$ without multiple zeros  whose derivative is given by $$P'(z)=q(z-d_{1})^{q_{1}}(z-d_{2})^{q_{2}},$$ where $d_{1}\not= d_{2}$.  Assume that $P(d_{1})\not=P(d_{2})$ and $\min\{q_{1},q_{2}\}\geq 2$, $q_{1}+q_{2}\geq 5$. If for any two nonconstant meromorphic functions $f$ and $g$, $P(f)\equiv P(g)$, then $f\equiv g$.
\end{lem}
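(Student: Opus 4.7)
The plan is to argue by contradiction. Assume $P(f)\equiv P(g)$ with $f\not\equiv g$, and derive a contradiction from Nevanlinna's second fundamental theorem combined with Mokhon'ko's lemma (Lemma~\ref{l2.3}). As preparation, I would first record the factorizations
\[ P(z)-P(d_1)=(z-d_1)^{q_1+1}R_1(z),\qquad P(z)-P(d_2)=(z-d_2)^{q_2+1}R_2(z),\]
with $\deg R_1=q_2$ and $\deg R_2=q_1$. Since $P$ has only simple zeros and $P(d_1)\neq P(d_2)$, any multiple root of $P(z)-P(d_1)$ other than $d_1$ would be a zero of $P'$, hence equal to $d_2$, forcing the forbidden equality $P(d_1)=P(d_2)$. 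Therefore $R_1$ has only simple zeros $\gamma_1,\dots,\gamma_{q_2}$ distinct from $d_1$ and $d_2$, and symmetrically $R_2$ has simple zeros $\delta_1,\dots,\delta_{q_1}$.

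The core of the argument is the multiplicity bookkeeping forced by the identity. If $z_0$ is a $d_1$-point of $f$ of multiplicity $m$, then $P(f)-P(d_1)$ vanishes to order $(q_1+1)m$ at $z_0$, hence so does $P(g)-P(d_1)$. Consequently either (a) $g(z_0)=d_1$ with the same multiplicity $m$, or (b) $g(z_0)=\gamma_j$ for some $j$ and $g-\gamma_j$ has a zero of order $(q_1+1)m\geq q_1+1\geq 3$ at $z_0$. A symmetric dichotomy holds at $d_2$-points of $f$ relative to the $\delta_j$'s, and poles of $f$ and $g$ coincide with the same multiplicity since $\deg P=q$. The key consequence is that zeros of $g-\gamma_j$ coming from alternative (b) contribute to the reduced counting function $\bar N(r,\gamma_j;g)$ only with weight at most $1/(q_1+1)$ of their count with multiplicity; analogously for the $\delta_j$'s and $f$.

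With these tools I would apply the second fundamental theorem to $g$ with the $q_2+2$ targets $\{\infty,d_1,\gamma_1,\dots,\gamma_{q_2}\}$ and, symmetrically, to $f$ with $\{\infty,d_2,\delta_1,\dots,\delta_{q_1}\}$. Using the weights $1/(q_1+1)$ and $1/(q_2+1)$ extracted above, together with the relation $T(r,f)=T(r,g)+S(r,f)+S(r,g)$ obtained from Mokhon'ko applied to $P(f)\equiv P(g)$ and the pole identification, the two inequalities should combine into a single estimate of the shape
\[ C(q_1,q_2)\bigl(T(r,f)+T(r,g)\bigr)\leq S(r,f)+S(r,g),\]
with a direct arithmetic check showing $C(q_1,q_2)>0$ whenever $\min\{q_1,q_2\}\geq 2$, $q_1+q_2\geq 5$ and $q\geq 6$. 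This forces $T(r,f)=S(r,f)$, contradicting the nonconstancy of $f$.

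The main obstacle I expect is the precise multiplicity accounting: one must verify that every zero of $g-\gamma_j$ (and of $f-\delta_j$) is counted exactly once with the correct reduction factor, and that zeros not arising from the critical-point transfer in~(b), together with contributions from non-critical preimages, are absorbed either by Mokhon'ko-type bounds or by $S(r,\cdot)$. The numerical threshold $q_1+q_2\geq 5$, as opposed to the weaker $\geq 4$, is exactly what is needed for the constant $C(q_1,q_2)$ to remain strictly positive after subtracting the unavoidable pole contributions $\bar N(r,\infty;f)+\bar N(r,\infty;g)$; balancing these terms sharply is the crux of the argument.
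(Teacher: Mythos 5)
First, note that the paper does not prove this lemma at all: it is quoted verbatim from Fujimoto \cite{2.2}, so your attempt has to be judged on its own merits rather than against an argument in the text. Your preliminary observations are fine (the factorizations $P(z)-P(d_i)$, the simplicity of the roots of $R_1,R_2$, the transfer of multiplicities, the identification of poles, and $T(r,f)=T(r,g)+O(1)$ via Mokhon'ko). The problem is the core estimate. The weight $1/(q_1+1)$ applies only to those zeros of $g-\gamma_j$ lying over $d_1$-points of $f$. At a zero of $g-\gamma_j$ where $f$ takes one of the simple-root values $\gamma_i$ --- which is the generic situation --- the orders of vanishing of $f-\gamma_i$ and $g-\gamma_j$ merely coincide and can be $1$, so no reduction factor is available. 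Summed over $j$, this block of $\sum_j\overline{N}(r,\gamma_j;g)$ can be as large as $q_2T(r,g)+S(r,g)$, which makes your second-fundamental-theorem inequality for $g$ (defect sum $q_2$) vacuous; the same happens symmetrically for $f$, and adding the two inequalities does not cancel these terms. They are genuinely of order $T(r,\cdot)$ and cannot be ``absorbed by Mokhon'ko-type bounds or by $S(r,\cdot)$'' as you hope.

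There is also a structural red flag: nowhere in your sketch is the hypothesis $f\not\equiv g$ actually used. Every step (the dichotomies at $d_1$- and $d_2$-points, the multiplicity weights, the two applications of the second fundamental theorem, Mokhon'ko) remains valid when $f\equiv g$, so if the bookkeeping really produced $C(q_1,q_2)\,(T(r,f)+T(r,g))\leq S(r,f)+S(r,g)$ with $C>0$, it would prove that no nonconstant meromorphic function exists. A correct proof must exploit $f\neq g$ in an essential way --- for instance through the zeros of $f-g$, or by viewing $(f,g)$ as a map into the curve $(P(x)-P(y))/(x-y)=0$ with the diagonal removed, which is how Fujimoto's actual criterion $q_1q_2>q_1+q_2$ (equivalent, given $q_1+q_2=q-1$, to $\min\{q_1,q_2\}\geq 2$ and $q_1+q_2\geq 5$) enters. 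That your accounting never produces the product $q_1q_2$ is a further sign that the arithmetic would not close. I would either cite Fujimoto as the paper does, or restructure the argument around the diagonal/off-diagonal decomposition of the common preimages of $P(d_1)$ and $P(d_2)$.
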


\begin{proof} [\textbf{Proof of the Theorem \ref{Th1}}]
Let us define the following functions:
 $$F=-\frac{Q(f)}{c} ~~ \text{and} ~~ G=-\frac{Q(g)}{c},$$
 and
 $$H=\left(\frac{F''}{F'}-\frac{2F'}{F-1}\right)-\left(\frac{G''}{G'}-\frac{2G'}{G-1}\right) ~~~\text{and}~~~
\psi= \left(\frac{F'}{F-1}-\frac{G'}{G-1}\right).$$
Since $f, g$ share $S_2$ CM, then $F, G$ share $1$ CM.\\
\medbreak
\textbf{Case-I} 
If possible, let $H   \not \equiv 0$. Then obviously $F-1$ and $G-1$ are not linearly dependent. Now, if possible, let   $\psi \equiv0$. Then we obtain
$$(F-1)=A(G-1),$$
where $A$ is a non-zero complex number, which is impossible as $H\not\equiv0$. Hence $$\psi \not\equiv 0.$$
Thus $m(r, \psi)=S(r,f)+S(r,g)$. Now, we see that
\begin{eqnarray*}
\psi =  \frac{(f-b)^m (f-a)^n f'}{c(F-1)}-\frac{(g - b)^m (g-a)^n g'}{c(G-1)}.
\end{eqnarray*}
Let $z_0$ be an $a$-point or a $b$-point of $f$ with multiplicity $r(\geq 1)$. Since $f$ and $g$ share $S_1$ IM, that $z_0$ would be a zero of $\psi$ of multiplicity at least $\min \{ m, n\}$. Thus by a simple calculations, we can write
\begin{eqnarray}\label{psudip_lemma1}
\nonumber&&\min \{ m,n \}\cdot \{ \overline{N}\left(r, a; f\right)+\overline{N}\left(r, b; f\right) \}\\
\nonumber& \leq & N\left(r, 0;\psi\right)\\
\nonumber& \leq & T(r, \psi) +O(1) \\
\nonumber& = & N(r, \psi) + m(r, \psi) +O(1) \\
& \leq & \overline{N}(r, f)+ \overline{N}(r, g) +S(r,f)+S(r,g).
\end{eqnarray}
Since $f, g$ share $S_1$ IM and $F, G$ share $1$ CM, then it is obvious to see that
\begin{eqnarray}
\label{psudip1} N(r,\infty ;H) & \leq & \overline{N}\left(r, a; f\right)+ \overline{N}\left(r, b; f\right)+\overline{N}(r,f)+\overline{N}(r,g) \\
\nonumber & &  + N_{0}(r, 0; f')+ N_{0}(r, 0; g')+S(r,f)+S(r,g),
\end{eqnarray}
where $N_{0}(r, 0;f')$ denotes the counting function of the zeros of $f'$ not coming from the zeros of $(f-a)(f-b)$ and $(F-1)$. Similarly $N_{0}(r, 0; g')$ is defined.
Again, \begin{eqnarray}
 \label{pnew}      N(r,1;F|=1)=N(r,1;G|=1) &\leq& N(r,\infty ;H)+S(r,F)+S(r,G),
       \end{eqnarray}
where $ N(r,1;F|=1)$ denotes the counting function of simple $1$-points of $F$. Using the second fundamental theorem for $f$ and $g$, we get
\begin{eqnarray}
\label{psudip2}(m+n+2)T(r,f) &\leq & \overline{N}(r,f)+ \overline{N}\left(r, \frac{1}{F-1}\right)+\overline{N}\left(r, \frac{1}{f-a}\right)\\
\nonumber & & +\overline{N}\left(r, \frac{1}{f-b}\right)-N_{0}(r, 0;f')+S(r,f)
\end{eqnarray}
and
\begin{eqnarray}
\label{psudip3}(m+n+2)T(r,g) &\leq & \overline{N}(r,g)+ \overline{N}\left(r, \frac{1}{G-1}\right)+\overline{N}\left(r, \frac{1}{g-a}\right)\\
\nonumber & & +\overline{N}\left(r, \frac{1}{g-b}\right)-N_{0}(r, 0;g')+S(r,g)
\end{eqnarray}
Using (\ref{psudip2}), (\ref{psudip3}), (\ref{pnew}), (\ref{psudip1}), and (\ref{psudip_lemma1}), we obtain
\begin{eqnarray}
\label{psudip4}& &(m+n+2)\{T(r,f)+T(r,g)\} \\
\nonumber & \leq & \overline{N}(r,f)+\overline{N}(r,g)+\overline{N}\left(r, \frac{1}{f-a}\right)+\overline{N}\left(r, \frac{1}{f-b}\right)+\overline{N}\left(r, \frac{1}{g-a}\right)\\
\nonumber &&+\overline{N}\left(r, \frac{1}{g-b}\right)+N(r,\infty;H)+\overline{N}\left(r, \frac{1}{F-1}|\geq 2\right)+\overline{N}\left(r, \frac{1}{G-1}\right)\\
\nonumber & & -N_{0}(r, 0;f')-N_{0}(r, 0;g') + S(r,f) +S(r,g).\\
\nonumber & \leq & 2\overline{N}(r,f)+2\overline{N}(r,g)+2\overline{N}\left(r, \frac{1}{f-a}\right)+2\overline{N}\left(r, \frac{1}{f-b}\right)+\overline{N}\left(r, \frac{1}{g-a}\right)\\
\nonumber &&+\overline{N}\left(r, \frac{1}{g-b}\right)+\overline{N}\left(r, \frac{1}{F-1}|\geq 2\right)+\overline{N}\left(r, \frac{1}{G-1}\right)\\
\nonumber & &  +S(r,f) +S(r,g).\\
\nonumber & \leq & 3\overline{N}(r,f)+3\overline{N}(r,g)+\overline{N}\left(r, \frac{1}{g-a}\right)+\overline{N}\left(r, \frac{1}{g-b}\right)\\
\nonumber &&+\overline{N}\left(r, \frac{1}{F-1}|\geq 2\right)+\overline{N}\left(r, \frac{1}{G-1}\right) + S(r,f) +S(r,g).\\
\nonumber & \leq & \frac{7}{2}\overline{N}(r,f)+\frac{7}{2}\overline{N}(r,g)+\overline{N}\left(r, \frac{1}{F-1}|\geq 2\right)+\overline{N}\left(r, \frac{1}{G-1}\right)+ S(r,f) +S(r,g).
\end{eqnarray}
Now
\begin{eqnarray}
\label{psudip5}& &\frac{1}{2}\overline{N}\left(r, \frac{1}{F-1}|\leq 1\right)+\overline{N}\left(r, \frac{1}{F-1}|\geq 2\right)\\
\nonumber &\leq & \frac{1}{2}N\left(r, \frac{1}{F-1}\right) \leq \frac{(m+n+1)}{2}T(r,f)+S(r,f)
\end{eqnarray}
and
\begin{eqnarray}
\label{psudip6}& &\frac{1}{2}\overline{N}\left(r, \frac{1}{G-1}|\leq 1\right)+\overline{N}\left(r, \frac{1}{G-1}|\geq 2\right)\\
\nonumber &\leq & \frac{1}{2}N\left(r, \frac{1}{G-1}\right) \leq \frac{(m+n+1)}{2}T(r,g)+S(r,g)
\end{eqnarray}
Using (\ref{psudip5}) and (\ref{psudip6}), we get from (\ref{psudip4})
\begin{eqnarray}
& &(m+n+2)\{T(r,f)+T(r,g)\} \\
\nonumber & \leq & \frac{7}{2}(\overline{N}(r,f)+\overline{N}(r,g))+\frac{(m+n+1)}{2} \{T(r,f)+T(r,g)\}+S(r,f)+S(r,g),
\end{eqnarray}
 which is a contradiction as $m+n\geq 5$. Therefore, our assumption that $H \not \equiv 0$ is not right. \\
\medbreak
\textbf{Case-II} Next we assume that $H\equiv 0$. Then  we have
\bea\label{eq10mm} \frac{1}{F-1}&\equiv&\frac{c_0}{G-1}+c_{1},\eea
where $c_{0}$ and $c_{1}$ are constants, with $c_{0}\not=0$. Thus by Lemma \ref{l2.3}, we have $$T(r,f)=T(r,g)+O(1).$$ Moreover, using given hypothesis, it is clear that $F$ and $G$ share $1$ CM. Since $m+n+1\geq 6$ and $\min\{m,n\}\geq 2$, thus in view of Lemma \ref{lem1}, we get from \ref{eq10mm}
\beas \frac{1}{F-1}&\equiv&\frac{c_0}{G-1},\eeas
i.e.,
\bea\label{eq1022} P(g)&\equiv& c_0 P(f),\eea
where $c_{0}$ is constant, with $c_{0}\not=0$. Since either $n\geq 3$ and $m\geq 2$, or, $m\geq 3$ and $n\geq 2$, applying Lemma \ref{lem3}, we have $c_{0}=1$, i.e., $P(f)\equiv P(g).$  Applying Lemma \ref{lem2}, we get $$f\equiv g.$$ This completes the proof.
\end{proof}
\section{Functions sharing one set and $\{\infty\}$}
In continuation of studying uniqueness of two entire functions when they share three sets, Gross (\cite{Gr}) proposed the following question:
\begin{ques} \label{q2.1}
  Can one find two finite sets $S_1$ and $S_2$ such that any two nonconstant entire functions $f$ and $g$ satisfying $E_{f}(S_{j})=E_{g}(S_{j})$ for
$j = 1, 2$ must be identical ?
\end{ques}
Inspired by Gross' question, many mathematician give explicitly a set $S$ with $n$ elements and make $n$ as small as possible such
that any two meromorphic functions $f$ and $g$ that share the value $\infty$ and the set $S$ must be equal (See \cite{FG}, \cite{FL}, \cite{lin},  \cite{Yi}, \cite{YLu}). In this direction, in 1995, Yi (\cite{Yi}) proved the following results:
\begin{theo} (\cite{Yi}) Let $S=\{z~:~z^n+az^{n-m}+b=0\}$, where $n$ and $m$ are two positive integers such that $m\geq 2$ and $n \geq2m+7$, with $n$ and $m$ having no common factor, $a$ and $b$ be two nonzero constants such that $z^n+az^{n-m}+b=0$ has no multiple roots. If $f$ and $g$ are two nonconstant meromorphic functions satisfying $E_{f}(S)=E_{g}(S)$ and $E_{f}(\{\infty\})=E_{g}(\{\infty\})$, then $f\equiv g$.
\end{theo}
In the same paper Yi (\cite{Yi}) proved that if we take $m=1$, then uniqueness may not occur.
\begin{theo} (\cite{Yi}) Let $S=\{z~:~z^n+az^{n-1}+b=0\}$, where $n\geq9$ be an integer and  $a$ and $b$ be two nonzero constants such that $z^n+az^{n-1}+b=0$ has no multiple roots. If $f$ and $g$ are two nonconstant meromorphic functions satisfying $E_{f}(S)=E_{g}(S)$ and $E_{f}(\{\infty\})=E_{g}(\{\infty\})$, then either $f\equiv g$, or $f\equiv \frac{-ah(h^{n-1}-1)}{h^n-1}$ and $g\equiv \frac{-a(h^{n-1}-1)}{h^n-1}$, where $h$ is a nonconstant meromorphic function.
\end{theo}
But Lahiri(\cite{La}) established a uniqueness result under  some condition  when $m=1$ in Yi's result.
\begin{theo}\label{susm} (\cite{Yi}) Let $S=\{z~:~z^n+az^{n-1}+b=0\}$, where $n\geq8$ be an integer and  $a$ and $b$ be two nonzero constants such that $z^n+az^{n-1}+b=0$ has no multiple roots. If $f$ and $g$ are two nonconstant meromorphic functions having no simple poles such that $E_{f}(S)=E_{g}(S)$ and $E_{f}(\{\infty\})=E_{g}(\{\infty\})$, then  $f\equiv g$.
\end{theo}
Later Fang and Lahiri (\cite{FL}) improved Theorem \ref{susm} by further reducing the cardinality of the same range set and proved the following theorem.
\begin{theo}\label{sus} (\cite{FL}) Let $S=\{z~:~z^n+az^{n-1}+b=0\}$, where $n\geq7$ be an integer and  $a$ and $b$ be two nonzero constants such that $z^n+az^{n-1}+b=0$ has no multiple roots. If $f$ and $g$ are two nonconstant meromorphic functions having no simple poles such that $E_{f}(S)=E_{g}(S)$ and $E_{f}(\{\infty\})=E_{g}(\{\infty\})$, then  $f\equiv g$.
\end{theo}
Later in this direction, many researchers gave many results by relaxing the sharing notations. In this section,  we consider the set  $S_2=\{z \in \mathbb{C}: P(z)=0\}$, where $P(z)$ is defined in (\ref{eq1n}) and establish some uniqueness results when  two nonconstant meromorphic functions sharing the sets $S_{2}$ and $\{\infty\}$.  The consideration of the set $S_2$ is new in connection to how two
meromorphic functions are uniquely determined by their two shared sets $S_2$ and $\{\infty\}$.  The following two theorems are the main
results of this section:
\begin{theo}\label{Th2.1}
Let $S_2=\{z \in \mathbb{C}: P(z)=0\}$, where $P(z)$ is defined in (\ref{eq1n}). If two nonconstant meromorphic functions $f$ and $g$  having no simple poles such that $f$ and $g$ share $\{\infty\}$ IM and $S_2$ CM, and,  $m+n\geq 7$,  $\min\{m,n\}\geq 2$, $\max\{m,n\}\geq 3$, then $f\equiv g$.
\end{theo}
\begin{theo}\label{Th2.2}
Let $S_2=\{z \in \mathbb{C}: P(z)=0\}$, where $P(z)$ is defined in (\ref{eq1n}). If two nonconstant meromorphic functions $f$ and $g$  share  $\{\infty\}$ CM and $S_2$ CM, and,  $m+n\geq 8$,  $\min\{m,n\}\geq 2$, $\max\{m,n\}\geq 3$, then $f\equiv g$.
\end{theo}
\begin{proof} [\textbf{Proof of the Theorem \ref{Th2.1}}]
Let us define the following two functions:
 $$F=-\frac{Q(f)}{c} ~~ \text{and} ~~ G=-\frac{Q(g)}{c},$$
 and
 $$H=\left(\frac{F''}{F'}-\frac{2F'}{F-1}\right)-\left(\frac{G''}{G'}-\frac{2G'}{G-1}\right).$$
Since $f, g$ share $S_2$ CM, then $F, G$ share $1$ CM.\\
\medbreak
\textbf{Case-I} 
If possible, let $H   \not \equiv 0$. Then obviously $F-1$ and $G-1$ are not linearly dependent. Since $f, g$ share $\infty$ IM and $F, G$ share $1$ CM, then it is obvious to see that
\begin{eqnarray}
\label{sudip1pp} N(r,\infty ;H) & \leq & \overline{N}\left(r, a; f\right)+ \overline{N}\left(r, b; f\right)+\overline{N}\left(r, a; g\right)+ \overline{N}\left(r, b; g\right)+\overline{N}(r,f) \\
\nonumber & &  + N_{0}(r, 0; f')+ N_{0}(r, 0; g')+S(r,f)+S(r,g),
\end{eqnarray}
where $N_{0}(r, 0;f')$ denotes the counting function of the zeros of $f'$ not coming from the zeros of $(f-a)(f-b)$ and $(F-1)$. Similarly $N_{0}(r, 0; g')$ is defined.
Again, \begin{eqnarray}
 \label{newpp}      N(r,1;F|=1)=N(r,1;G|=1) &\leq& N(r,\infty ;H)+S(r,F)+S(r,G),
       \end{eqnarray}
where $ N(r,1;F|=1)$ denotes the counting function of simple $1$-points of $F$. Using the second fundamental theorem for $f$ and $g$ we get
\begin{eqnarray}
\label{sudip2pp}(m+n+2)T(r,f) &\leq & \overline{N}(r,f)+ \overline{N}\left(r, \frac{1}{F-1}\right)+\overline{N}\left(r, \frac{1}{f-a}\right)\\
\nonumber & & +\overline{N}\left(r, \frac{1}{f-b}\right)-N_{0}(r, 0;f')+S(r,f)
\end{eqnarray}
and
\begin{eqnarray}
\label{sudip3pp}(m+n+2)T(r,g) &\leq & \overline{N}(r,g)+ \overline{N}\left(r, \frac{1}{G-1}\right)+\overline{N}\left(r, \frac{1}{g-a}\right)\\
\nonumber & & +\overline{N}\left(r, \frac{1}{g-b}\right)-N_{0}(r, 0;g')+S(r,g)
\end{eqnarray}
Using (\ref{sudip2pp}), (\ref{sudip3pp}), (\ref{newpp}), and (\ref{sudip1pp}), we obtain
\begin{eqnarray}
\label{sudip4pp}& &(m+n+2)\{T(r,f)+T(r,g)\} \\
\nonumber & \leq & \overline{N}(r,f)+\overline{N}(r,g)+\overline{N}\left(r, \frac{1}{f-a}\right)+\overline{N}\left(r, \frac{1}{f-b}\right)+\overline{N}\left(r, \frac{1}{g-a}\right)\\
\nonumber &&+\overline{N}\left(r, \frac{1}{g-b}\right)+N(r,\infty;H)+\overline{N}\left(r, \frac{1}{F-1}|\geq 2\right)+\overline{N}\left(r, \frac{1}{G-1}\right)\\
\nonumber & & -N_{0}(r, 0;f')-N_{0}(r, 0;g') + S(r,f) +S(r,g).\\
\nonumber & \leq & 2\overline{N}(r,f)+\overline{N}(r,g)+2\overline{N}\left(r, \frac{1}{f-a}\right)+2\overline{N}\left(r, \frac{1}{f-b}\right)+2\overline{N}\left(r, \frac{1}{g-a}\right)\\
\nonumber &&+2\overline{N}\left(r, \frac{1}{g-b}\right)+\overline{N}\left(r, \frac{1}{F-1}|\geq 2\right)+\overline{N}\left(r, \frac{1}{G-1}\right)\\
\nonumber & &  +S(r,f) +S(r,g).\\
\nonumber & \leq & \frac{3}{2}\overline{N}(r,f)+\frac{3}{2}\overline{N}(r,g)+4(T\left(r, f\right)+T\left(r, g\right))+\overline{N}\left(r, \frac{1}{F-1}|\geq 2\right)\\
\nonumber && +\overline{N}\left(r, \frac{1}{G-1}\right)+ S(r,f) +S(r,g).
\end{eqnarray}
Now
\begin{eqnarray}
\label{sudip5pp}& &\frac{1}{2}\overline{N}\left(r, \frac{1}{F-1}|\leq 1\right)+\overline{N}\left(r, \frac{1}{F-1}|\geq 2\right)\\
\nonumber &\leq & \frac{1}{2}N\left(r, \frac{1}{F-1}\right) \leq \frac{(m+n+1)}{2}T(r,f)+S(r,f)
\end{eqnarray}
and
\begin{eqnarray}
\label{sudip6pp}& &\frac{1}{2}\overline{N}\left(r, \frac{1}{G-1}|\leq 1\right)+\overline{N}\left(r, \frac{1}{G-1}|\geq 2\right)\\
\nonumber &\leq & \frac{1}{2}N\left(r, \frac{1}{G-1}\right) \leq \frac{(m+n+1)}{2}T(r,g)+S(r,g)
\end{eqnarray}
Using (\ref{sudip5pp}) and (\ref{sudip6pp}), we get from (\ref{sudip4pp})
\begin{eqnarray}
& &(m+n-2)\{T(r,f)+T(r,g)\} \\
\nonumber & \leq & \frac{3}{2}(\overline{N}(r,f)+\overline{N}(r,g))+\frac{(m+n+1)}{2} \{T(r,f)+T(r,g)\}+S(r,f)+S(r,g),
\end{eqnarray}
Since $f$ and $g$ has no simple poles, thus
\begin{eqnarray}
& &(\frac{m+n}{2}-\frac{5}{2})\{T(r,f)+T(r,g)\} \\
\nonumber & \leq & \frac{3}{2}(\overline{N}(r,\infty;f\mid\geq2)+\overline{N}(r,\infty;g\mid\geq2))+S(r,f)+S(r,g)\\
\nonumber & \leq & \frac{3}{4}(N(r,\infty;f)+N(r,\infty;g))+S(r,f)+S(r,g),
\end{eqnarray}
 which is a contradiction as $m+n\geq 7$. Therefore, our assumption that $H \not \equiv 0$ is not right. \\
\medbreak
\textbf{Case-II} Next we assume that $H\equiv 0$. Then  we have
\bea\label{eq10mpp} \frac{1}{F-1}&\equiv&\frac{c_0}{G-1}+c_{1},\eea
where $c_{0}$ and $c_{1}$ are constants, with $c_{0}\not=0$. Thus by Lemma \ref{l2.3}, we have $$T(r,f)=T(r,g)+O(1).$$ Moreover, using given hypothesis, it is clear that $F$ and $G$ share $1$ CM. Since $m+n+1\geq 6$ and $\min\{m,n\}\geq 2$, thus in view of Lemma \ref{lem1}, we get from \ref{eq10mpp}
\beas \frac{1}{F-1}&\equiv&\frac{c_0}{G-1},\eeas
i.e.,
\bea\label{eq1022pp} P(g)&\equiv& c_0 P(f),\eea
where $c_{0}$ is constant, with $c_{0}\not=0$. Since either $n\geq 3$ and $m\geq 2$, or, $m\geq 3$ and $n\geq 2$, applying Lemma \ref{lem3}, we have $c_{0}=1$, i.e., $P(f)\equiv P(g).$  Applying Lemma \ref{lem2}, we get $$f\equiv g.$$ This completes the proof.
\end{proof}
\begin{proof} [\textbf{Proof of the Theorem \ref{Th2.2}}]
Let us define the following functions:
 $$F=-\frac{Q(f)}{c} ~~ \text{and} ~~ G=-\frac{Q(g)}{c},$$
 and
 $$H=\left(\frac{F''}{F'}-\frac{2F'}{F-1}\right)-\left(\frac{G''}{G'}-\frac{2G'}{G-1}\right).$$
Since $f, g$ share $S_2$ CM, then $F, G$ share $1$ CM.\\
\medbreak
\textbf{Case-I} 
If possible, let $H   \not \equiv 0$. Then obviously $F-1$ and $G-1$ are not linearly dependent. Since $f, g$ share $\infty$ CM and $F, G$ share $1$ CM, then it is obvious to see that
\begin{eqnarray}
\label{sudip1} N(r,\infty ;H) & \leq & \overline{N}\left(r, a; f\right)+ \overline{N}\left(r, b; f\right)+\overline{N}\left(r, a; g\right)+ \overline{N}\left(r, b; g\right) \\
\nonumber & &  + N_{0}(r, 0; f')+ N_{0}(r, 0; g')+S(r,f)+S(r,g),
\end{eqnarray}
where $N_{0}(r, 0;f')$ denotes the counting function of the zeros of $f'$ not coming from the zeros of $(f-a)(f-b)$ and $(F-1)$. Similarly $N_{0}(r, 0; g')$ is defined.
Again, \begin{eqnarray}
 \label{new}      N(r,1;F|=1)=N(r,1;G|=1) &\leq& N(r,\infty ;H)+S(r,F)+S(r,G),
       \end{eqnarray}
where $ N(r,1;F|=1)$ denotes the counting function of simple $1$-points of $F$. Using second fundamental theorem for $f$ and $g$ we get
\begin{eqnarray}
\label{sudip2}(m+n+2)T(r,f) &\leq & \overline{N}(r,f)+ \overline{N}\left(r, \frac{1}{F-1}\right)+\overline{N}\left(r, \frac{1}{f-a}\right)\\
\nonumber & & +\overline{N}\left(r, \frac{1}{f-b}\right)-N_{0}(r, 0;f')+S(r,f)
\end{eqnarray}
and
\begin{eqnarray}
\label{sudip3}(m+n+2)T(r,g) &\leq & \overline{N}(r,g)+ \overline{N}\left(r, \frac{1}{G-1}\right)+\overline{N}\left(r, \frac{1}{g-a}\right)\\
\nonumber & & +\overline{N}\left(r, \frac{1}{g-b}\right)-N_{0}(r, 0;g')+S(r,g)
\end{eqnarray}
Using (\ref{sudip2}), (\ref{sudip3}), (\ref{new}), and (\ref{sudip1}), we obtain
\begin{eqnarray}
\label{sudip4}& &(m+n+2)\{T(r,f)+T(r,g)\} \\
\nonumber & \leq & \overline{N}(r,f)+\overline{N}(r,g)+\overline{N}\left(r, \frac{1}{f-a}\right)+\overline{N}\left(r, \frac{1}{f-b}\right)+\overline{N}\left(r, \frac{1}{g-a}\right)\\
\nonumber &&+\overline{N}\left(r, \frac{1}{g-b}\right)+N(r,\infty;H)+\overline{N}\left(r, \frac{1}{F-1}|\geq 2\right)+\overline{N}\left(r, \frac{1}{G-1}\right)\\
\nonumber & & -N_{0}(r, 0;f')-N_{0}(r, 0;g') + S(r,f) +S(r,g).\\
\nonumber & \leq & \overline{N}(r,f)+\overline{N}(r,g)+2\overline{N}\left(r, \frac{1}{f-a}\right)+2\overline{N}\left(r, \frac{1}{f-b}\right)+2\overline{N}\left(r, \frac{1}{g-a}\right)\\
\nonumber &&+2\overline{N}\left(r, \frac{1}{g-b}\right)+\overline{N}\left(r, \frac{1}{F-1}|\geq 2\right)+\overline{N}\left(r, \frac{1}{G-1}\right)\\
\nonumber & &  +S(r,f) +S(r,g).\\
\nonumber & \leq & \overline{N}(r,f)+\overline{N}(r,g)+4(T\left(r, f\right)+T\left(r, g\right))\\
\nonumber &&+\overline{N}\left(r, \frac{1}{F-1}|\geq 2\right)+\overline{N}\left(r, \frac{1}{G-1}\right) + S(r,f) +S(r,g).
\end{eqnarray}
Now
\begin{eqnarray}
\label{sudip5}& &\frac{1}{2}\overline{N}\left(r, \frac{1}{F-1}|\leq 1\right)+\overline{N}\left(r, \frac{1}{F-1}|\geq 2\right)\\
\nonumber &\leq & \frac{1}{2}N\left(r, \frac{1}{F-1}\right) \leq \frac{(m+n+1)}{2}T(r,f)+S(r,f)
\end{eqnarray}
and
\begin{eqnarray}
\label{sudip6}& &\frac{1}{2}\overline{N}\left(r, \frac{1}{G-1}|\leq 1\right)+\overline{N}\left(r, \frac{1}{G-1}|\geq 2\right)\\
\nonumber &\leq & \frac{1}{2}N\left(r, \frac{1}{G-1}\right) \leq \frac{(m+n+1)}{2}T(r,g)+S(r,g)
\end{eqnarray}
Thus using (\ref{sudip5}) and (\ref{sudip6}), we get from (\ref{sudip4}) that
\begin{eqnarray}
& &(m+n-2)\{T(r,f)+T(r,g)\} \\
\nonumber & \leq & (\overline{N}(r,\infty;f)+\overline{N}(r,\infty;g))+\frac{(m+n+1)}{2} \{T(r,f)+T(r,g)\}+S(r,f)+S(r,g),
\end{eqnarray}
 which is a contradiction as $m+n\geq 8$. Therefore, our assumption that $H \not \equiv 0$ is not right. \\
\medbreak
\textbf{Case-II} Next we assume that $H\equiv 0$. Then  we have
\bea\label{eq10m} \frac{1}{F-1}&\equiv&\frac{c_0}{G-1}+c_{1},\eea
where $c_{0}$ and $c_{1}$ are constants, with $c_{0}\not=0$. Thus by Lemma \ref{l2.3}, we have $$T(r,f)=T(r,g)+O(1).$$ Moreover, using given hypothesis, it is clear that $F$ and $G$ share $1$ CM. Since $m+n+1\geq 6$ and $\min\{m,n\}\geq 2$, thus in view of Lemma \ref{lem1}, we get from \ref{eq10m}
\beas \frac{1}{F-1}&\equiv&\frac{c_0}{G-1},\eeas
i.e.,
\bea\label{eq1022} P(g)&\equiv& c_0 P(f),\eea
where $c_{0}$ is constant, with $c_{0}\not=0$. Since either $n\geq 3$ and $m\geq 2$, or, $m\geq 3$ and $n\geq 2$, applying Lemma \ref{lem3}, we have $c_{0}=1$, i.e., $P(f)\equiv P(g).$  Applying Lemma \ref{lem2}, we get $$f\equiv g.$$ This completes the proof.
\end{proof}
\section{Acknowledgements}
The authors are grateful to the anonymous referees for their valuable suggestions which considerably improved the presentation of the paper.\par
 Mr. Amit Kumar Pal is thankful to the Higher Education Dept., Govt. of West Bengal, for granting the SVMCM research fellowship(No.: WBP211653035749), during the tenure of which the work was done.\\
Mr. Sudip Saha is thankful to the Council of Scientific and Industrial Research, HRDG, India for granting Senior Research
Fellowship (File No.: 08/525(0003)/2019-EMR-I) during the tenure of which this work was done.\par
The research work of  Dr. B. Chakraborty is  supported by the Department of Higher Education, Science and Technology \text{\&} Biotechnology, Govt. of West Bengal under the sanction order no. 1303(sanc.)/STBT-11012(26)/17/2021-ST SEC dated 14/03/2022.

\end{document}